\newtheorem{remark}{Remark}
\title{SCALABLE AND FAULT TOLERANT COMPUTATION WITH THE 
SPARSE GRID COMBINATION TECHNIQUE\thanks{This research was 
supported by the Australian Research Council's {\em Linkage Projects} 
funding scheme (project number LP110200410). We are grateful to 
Fujitsu Laboratories of Europe for providing funding as the collaborative 
partner in this project.}}
\author{Brendan Harding\footnotemark[2] 
\and Markus Hegland\footnotemark[2] 
\and Jay Larson\footnotemark[2] 
\and James Southern\footnotemark[3]}
\begin{document}

\maketitle

\renewcommand{\thefootnote}{\fnsymbol{footnote}}

\footnotetext[2]{Mathematical Sciences Institute, Australian National University, Canberra, Australian Capital Territory, Australia, 0200.}
\footnotetext[3]{Fujitsu Laboratories of Europe, Hayes Park Central, Hayes End Road, Hayes, Middlesex, UB4 8FE, United Kingdom.}

\renewcommand{\thefootnote}{\arabic{footnote}}

\begin{abstract}
This paper continues to develop a fault tolerant extension of the sparse grid 
combination technique recently proposed in~{[{B.~Harding and M.~Hegland}, 
{\em ANZIAM J.}, 54~(CTAC2012), pp.~C394--C411]}. 
The approach is novel for two reasons, first it provides several levels in which
one can exploit parallelism leading towards massively parallel implementations, 
and second, it provides algorithm-based fault tolerance so that solutions can 
still be recovered if failures occur during computation. 
We present a generalisation of the combination technique from which the 
fault tolerant algorithm is a consequence. 
Using a model for the time between faults on each node of a high performance 
computer we provide bounds on the expected error for interpolation with this 
algorithm. 
Numerical experiments on the scalar advection PDE demonstrate that the
algorithm is resilient to faults on a real application. 
It is observed that the trade-off of recovery time to decreased accuracy 
of the solution is suitably small. 
A comparison with traditional checkpoint-restart methods applied to the 
combination technique show that our approach is highly scalable with
respect to the number of faults.
\end{abstract}

\begin{keywords} 
exascale computing, algorithm-based fault tolerance, sparse grid combination technique, parallel algorithms
\end{keywords}

\begin{AMS}
65Y05, 68W10
\end{AMS}

\pagestyle{myheadings}
\thispagestyle{plain}
\markboth{B. HARDING, M. HEGLAND, J. LARSON AND J. SOUTHERN}{SCALABLE AND FAULT TOLERANT COMBINATION TECHNIQUE}

\section{Introduction}

Many recent survey articles on the challenges of achieving exascale computing
identify three issues to be overcome: exploiting massive parallelism,
reducing energy usage and, in particular, coping with run-time
failures~\cite{cappello,gsd,cggkks}.
Faults are an issue at peta/exa-scale due to the increasing number of components in
such systems. 
Traditional checkpoint-restart based solutions become unfeasible at this scale as the 
decreasing mean time between failures approaches the time required to checkpoint 
and restart an application. 
Algorithm based fault tolerance has been studied as a promising solution to this
issue for many problems~\cite{huang_abraham,bddl}.

Sparse grids were introduced in the study of high dimensional problems as a way
to reduce the {\em curse of dimensionality}.
They are based on the observation that when a solution on a regular grid is 
decomposed into its hierarchical bases the highest frequency components 
contribute the least to sufficiently smooth solutions.
Removing some of these high frequency components has a small impact on
the accuracy of the solution whilst significantly reducing the computational 
complexity~\cite{griebel,griebel_bungartz}.
The combination technique was introduced to approximate sparse grid solutions
without the complications of computing with a hierarchical basis.
In recent years these approaches have been applied to a wide variety of
applications from real time visualisation of complex datasets to solving high
dimensional problems that were previously cumbersome~\cite{murarasu,griebel}.

Previously~\cite{my_ctac_paper,jay_parco,my_sga_paper} it has been described how
the combination technique can be implemented within a {\em Map Reduce} framework.
Doing so allows one to exploit an extra layer of parallelism and fault tolerance
can be achieved by recomputing failed map tasks as described in~\cite{dean}.
Also proposed was an alternative approach to fault tolerance in which
recomputation can be avoided for a small trade off in solution error.
In~\cite{my_parco_paper} we demonstrated this approach for a simple two-dimensional 
problem showing that the average solution error after simulated
faults was generally close to that without faults.
In this paper we develop and discuss this approach in much greater detail. In
particular we develop a general theory for computing new combination
coefficients and discuss a three-dimensional implementation based on MPI and OpenMP
which scales well for relatively small problems.
As has been done in the previous 
literature~\cite{my_ctac_paper,jay_parco,my_sga_paper}, 
we use the solution of the scalar advection PDE for our numerical experiments.

The remainder of the paper is organised as follows. 
In Section~\ref{sec:back} we review the combination technique and provide 
some well-known results which are relevant to our analysis of the fault 
tolerant combination technique. 
We then develop the notion of a general combination technique. 

In Section~\ref{sec:faults} we describe how the combination technique can be
modified to be fault tolerant as an application of the general combination technique. 
Using a simple model for faults on each node of a supercomputer we are able 
to model the failure of component grids in the combination technique and apply 
this to the simulation of faults in our code. 
We present bounds on the expected error and discuss in how faults affect the
scalability of the algorithm as a whole.

In Section~\ref{sec:implem} we describe the details of our implementation. 
In particular we discuss the multi-layered approach and the way in which
multiple components work together in order to harness the many levels of
parallelism. We also discuss the scalability bottleneck caused by communications 
and several ways in which one may address this.

Finally, in Section~\ref{sec:numres} we present numerical results obtained by 
running our implementation with simulated faults on a PDE solver. We demonstrate that our 
approach scales well to a large number of faults and has a relatively small impact on 
the solution error.

\section{The Combination Technique and a Generalisation}\label{sec:back}

We introduce the combination technique and a classical result which will be used in our analysis of the fault tolerant algorithm. For a complete introduction of the combination technique one should
refer to~\cite{garcke,griebel,griebel_bungartz}. 
We then go on to extend this to a more general notion of a combination technique building on existing work on adaptive sparse grids~\cite{hegland_asg}.

\subsection{The Combination Technique}\label{sec:cct}

Let $i\in\mathbb{N}$, then we define $\Omega_{i}:=\{k 2^{-i}:k=0,\dots,2^{i}\}$
to be a discretisation of the unit interval. Similarly for $i\in\mathbb{N}^{d}$
we define $\Omega_{i}:=\Omega_{i_{1}}\times\cdots\times\Omega_{i_{d}}$ as a grid
on the unit $d$-cube. Throughout the rest of this paper we treat the variables
$i,j$ as multi-indices in $\mathbb{N}^{d}$. We say $i\leq j$ if and only if
$i_{k}\leq j_{k}$ for all $k\in\{1,\dots,d\}$, and similarly, $i<j$ if and only
if $i\leq j$ and $i\neq j$.

Now suppose we have a problem with solution $u\in V\subset C([0,1]^{d})$, then
we use $V_{i}\subset V$ to denote the function space consisting of piecewise
linear functions uniquely determined by their values on the grid $\Omega_{i}$.
Further, we denote an approximation of $u$ in the space $V_{i}$ by $u_{i}$.
The sparse grid space of level $n$ is defined to be
$V^{s}_{n}:=\sum_{\|i\|_{1}\leq n}V_{i}$. A sparse grid solution is a
$u^{s}_{n}\in V^{s}_{n}$ which closely approximates $u\in V$.
The combination technique approximates a sparse grid solution by taking the sum
of several solutions from different anisotropic grids. The classical combination
technique is given by the equation
\begin{equation}\label{eqn:cct}
u^{c}_{n}:=\sum_{k=0}^{d-1}(-1)^{k}\binom{d-1}{k}\sum_{\|i\|_{1}=n-k}u_{i} \,.
\end{equation}
Fundamentally, this is an application of the inclusion{\slash}exclusion
principle. This can be seen if the function spaces are viewed as a
lattice~\cite{hegland_asg}. For example, if one wishes to add the functions
$u_{i}\in V_{i}$ and $u_{j}\in V_{j}$ then the result will have two
contributions from the intersection space $V_{i\wedge j}=V_{i}\cap V_{j}$, with
$i\wedge j=(\min\{i_{1},j_{1}\},\dots,\min\{i_{d},j_{d}\})$. To avoid this we
simply take $u_{i}+u_{j}-u_{i\wedge j}$. This can be seen in
Figure~\ref{fig:ct2d}, which shows a level 4 combination in 2 dimensions.

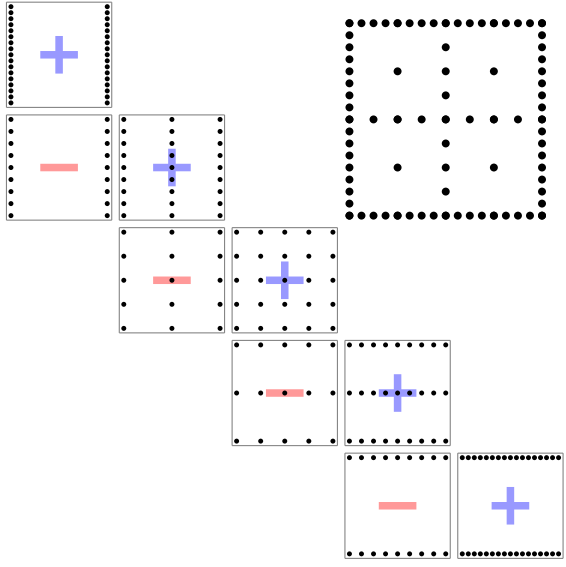
\begin{figure}
\centering
\begin{tikzpicture}
\scriptsize
\foreach \i in {0,...,4} {
	\pgfmathtruncatemacro{\x}{\i};
	\draw[black!50] (-0.06+1.5*\x, 5.94-1.5*\x) -- (-0.06+1.5*\x+1.4, 5.94-1.5*\x) {};
	\draw[black!50] (-0.06+1.5*\x, 5.94-1.5*\x) -- (-0.06+1.5*\x, 5.94-1.5*\x+1.4) {};
	\draw[black!50] (-0.06+1.5*\x+1.4, 5.94-1.5*\x) -- (-0.06+1.5*\x+1.4, 5.94-1.5*\x+1.4) {};
	\draw[black!50] (-0.06+1.5*\x, 5.94-1.5*\x+1.4) -- (-0.06+1.5*\x+1.4, 5.94-1.5*\x+1.4) {};
	\draw[blue!40,line width=1.0mm] (0.64+1.5*\x-0.25, 6.0+0.64-1.5*\x) -- (0.64+1.5*\x+0.25, 6.0+0.64-1.5*\x);
	\draw[blue!40,line width=1.0mm] (0.64+1.5*\x, 6.0+0.64-1.5*\x-0.25) -- (0.64+1.5*\x, 6.0+0.64-1.5*\x+0.25);
}
\foreach \i in {0,...,3} {
	\pgfmathtruncatemacro{\x}{\i};
	\draw[black!50] (-0.06+1.5*\x, 4.44-1.5*\x) -- (-0.06+1.5*\x+1.4, 4.44-1.5*\x) {};
	\draw[black!50] (-0.06+1.5*\x, 4.44-1.5*\x) -- (-0.06+1.5*\x, 4.44-1.5*\x+1.4) {};
	\draw[black!50] (-0.06+1.5*\x+1.4, 4.44-1.5*\x) -- (-0.06+1.5*\x+1.4, 4.44-1.5*\x+1.4) {};
	\draw[black!50] (-0.06+1.5*\x, 4.44-1.5*\x+1.4) -- (-0.06+1.5*\x+1.4, 4.44-1.5*\x+1.4) {};
	\draw[red!40,line width=1.0mm] (0.64+1.5*\x-0.25, 4.5+0.64-1.5*\x) -- (0.64+1.5*\x+0.25, 4.5+0.64-1.5*\x);
}
\foreach \i in {0,...,33} { 
	\pgfmathtruncatemacro{\y}{\i / 2};
	\pgfmathtruncatemacro{\x}{\i - 2 * \y};
	\node[fill,circle,scale=0.25] at (0.0+1.28*\x,6.0+0.08*\y) {};
	\node[fill,circle,scale=0.25] at (6.0+0.08*\y,0.0+1.28*\x) {};
}
\foreach \i in {0,...,26} { 
	\pgfmathtruncatemacro{\y}{\i / 3};
	\pgfmathtruncatemacro{\x}{\i - 3 * \y};
	\node[fill,circle,scale=0.25] at (1.5+0.64*\x,4.5+0.16*\y) {};
	\node[fill,circle,scale=0.25] at (4.5+0.16*\y,1.5+0.64*\x) {};
}
\foreach \i in {0,...,24} { 
	\pgfmathtruncatemacro{\y}{\i / 5};
	\pgfmathtruncatemacro{\x}{\i - 5 * \y};
	\node[fill,circle,scale=0.25] at (3.0+0.32*\x,3.0+0.32*\y) {};
}
\foreach \i in {0,...,17} { 
	\pgfmathtruncatemacro{\y}{\i / 2};
	\pgfmathtruncatemacro{\x}{\i - 2 * \y};
	\node[fill,circle,scale=0.25] at (0.0+1.28*\x,4.5+0.16*\y) {};
	\node[fill,circle,scale=0.25] at (4.5+0.16*\y,0.0+1.28*\x) {};
}
\foreach \i in {0,...,14} { 
	\pgfmathtruncatemacro{\y}{\i / 3};
	\pgfmathtruncatemacro{\x}{\i - 3 * \y};
	\node[fill,circle,scale=0.25] at (1.5+0.64*\x,3.0+0.32*\y) {};
	\node[fill,circle,scale=0.25] at (3.0+0.32*\y,1.5+0.64*\x) {};
}
\foreach \i in {0,...,33} { 
	\pgfmathtruncatemacro{\y}{\i / 2};
	\pgfmathtruncatemacro{\x}{\i - 2 * \y};
	\node[fill,circle,scale=0.4] at (4.5+2.56*\x,4.5+0.16*\y) {};
	\node[fill,circle,scale=0.4] at (4.5+0.16*\y,4.5+2.56*\x) {};
}
\foreach \i in {0,...,8} { 
	\pgfmathtruncatemacro{\y}{\i};
	\pgfmathtruncatemacro{\x}{\i};
	\node[fill,circle,scale=0.4] at (4.5+1.28,4.5+0.32*\y) {};
	\node[fill,circle,scale=0.4] at (4.5+0.32*\x,4.5+1.28) {};
}
\foreach \i in {0,...,9} { 
	\pgfmathtruncatemacro{\y}{\i / 2};
	\pgfmathtruncatemacro{\x}{\i - 2 * \y};
	\node[fill,circle,scale=0.4] at (4.5+0.64+1.28*\x,4.5+0.64*\y) {};
}
\end{tikzpicture}
\caption{\label{fig:ct2d} A level 4 combination in 2 dimensions. The 9 component grids are arranged according to the frequency of data points in each dimension. A plus or minus denotes a combination coefficient of $+1$ or $-1$ respectively. In the top right is the (enlarged) sparse grid corresponding to the union of the component grids.}
\end{figure}

An important concept in the development of sparse grids is that of the
hierarchical space (sometimes also referred to as a hierarchical surplus). A
simple definition of the hierarchical space $W_{i}$ is the space of all
functions $f_{i}\in V_{i}$ such that $f_{i}$ is zero when sampled on all grid
points in the set $\bigcup_{j<i}\Omega_{j}$. Equivalently we have
$V_{i}=W_{i}\oplus\sum_{j<i}V_{j}$. Noting that $V_{i}=\bigoplus_{j\leq
i}W_{j}$, a hierarchical decomposition of $u_{i}\in V_{i}$ is the computation of
the unique components $h_{j}\in W_{j}$ for $j\leq i$ such that
$u_{i}=\sum_{j\leq i}h_{j}$. The sparse grid space can also be written in terms
of hierarchical spaces as $V^{s}_{n}=\bigoplus_{\|i\|_{1}\leq n}W_{i}$.

Let $H^{2}_{\text{mix}}$ be the Sobolev-space with dominating mixed derivatives
with norm
\begin{equation}
\|u\|^{2}_{H^{2}_{\text{mix}}}=\sum_{\|i\|_{\infty}\leq 2}\left\|\frac{\partial^{\|i\|_{1}}}{\partial x^{i}}u\right\|^{2}_{2}
\end{equation}
If $u\in H^{2}_{\text{mix}}$ then we have the estimate $\|h_{j}\|_{2}\leq
3^{-d}2^{-2\|i\|_{1}}|u|_{H^{2}_{\text{mix}}}$ for each of the hierarchical spaces where
$|u|_{H^{2}_{\text{mix}}}:=\left\|\frac{\partial^{2d}}{\partial
x_{1}^{2}\cdots\partial x_{d}^{2}}u\right\|^{2}_{2}$ is a
semi-norm~\cite{garcke}. In the classical theory of the
combination technique this estimate is used to prove the error bound
\begin{align}
\|u-u^{c}_{n}\|_{2} \leq \sum_{\|j\|_{1}>n}\|h_{j}\|_{2}&\leq 3^{-d}|u|_{H^{2}_{\text{mix}}}\sum_{k=n+1}^{\infty}2^{-2k}\binom{k+d-1}{d-1} \label{eqn:ccterr} \\
&=\frac{1}{3}\cdot 3^{-d} 2^{-2n} |u|_{H^{2}_{\text{mix}}}\sum_{k=0}^{d-1}\binom{n+d}{k}\left(\frac{1}{3}\right)^{d-1-k} \label{eqn:ccterr2} \\
&= \frac{1}{3}\cdot3^{-d}2^{-2n}|u|_{H^{2}_{\text{mix}}}\left(\frac{n^{d-1}}{(d-1)!}+\mathcal{O}(n^{d-2})\right) \label{eqn:ccterr3}
\end{align}
for sparse grid interpolation. 
Similar bounds can be shown for the $\infty$ and energy norms~\cite{griebel_bungartz}. 

In practice, strongly anisotropic grids, i.e.\ those with
$\|i\|_{\infty}\approx\|i\|_{1}$, can be problematic. Not only are they
difficult to compute in some circumstances but one also finds that they give
poor approximations that do not cancel out in the combination as expected.
Therefore it is often beneficial to implement a {\em truncated} combination. In
this paper we define a truncated combination as
\begin{equation}\label{eqn:tct}
u^{c}_{n,\tau}:=\sum_{k=0}^{d-1}(-1)^{k}\binom{d-1}{k}\sum_{\substack{\|i\|_{1}=n-k \\ \min(i)\geq \tau}}u_{i} \,,
\end{equation}
where $\tau$ is referred to as the truncation parameter. Such combinations will be used for the numerical results presented in Section~\ref{sec:numres}.

\subsection{A General Combination Technique}\label{sec:gct}

The combination technique can be generalised into arbitrary sums of solutions.
Given a (finite) set of multi-indices $I\subset\mathbb{N}^{d}$, one can write
\begin{equation}\label{eqn:gct}
u^{c}_{I}=\sum_{i\in I}c_{i}u_{i} \,,
\end{equation}
where the $c_{i}$ are referred to as the combination coefficients. 
It is easy to see that $u^{c}_{I}\in V^{s}_{I}:=\sum_{i\in I}V_{i}=\bigoplus_{i\in {I\downarrow}}W_{i}$ where ${I\downarrow}=\{i\in\mathbb{N}^{d}:\exists {j\in I}\text{ s.t. }i\leq j\}$. 
Not every choice of the $c_{i}$ will 
produce a reasonable approximation to $u$.
The question now is which combination coefficients produce the best
approximation to $u$? One could attempt to solve the optimisation problem of
minimising for example $\|u-\sum_{i\in I}c_{i}u_{i}\|_{2}$ which is known as {\em
Opticom}~\cite{hgc07}.
However this is cumbersome to solve in a massively parallel implementation and
also requires an approximation of the residual.
In this paper we consider combinations which we know {\em a priori} will give a
good approximation in some sense.
We define a {\em sensible} combination to be one in which each hierarchical
space contributes either once to the solution, or not at all. 
In particular we would like the coefficients to follow an inclusion{\slash}exclusion 
as described in Section~\ref{sec:cct}.
We observe that $W_{i}\subset V_{j}$ for all $j\geq i$, therefore one can easily 
determine how many times a given $W_{i}$ contributes to the solution by summing 
all of the coefficients $c_{j}$ for which $W_{i}\subset V_{j}$. 
In light of this we have the following definition.

\begin{definition}
A set of combination coefficients $\{c_{i}\}_{i\in I}$ is said to be {\bf\em
valid} if for each $i\in I$ it satisfies the property
$$
\sum_{j\in I,\, j\geq i}c_{j}\in\{0,1\} \,.
$$
We will also refer to a combination (of solutions) as valid if the corresponding
set of combination coefficients are valid.
\end{definition}

Another common assumption is that $\sum_{j\in I}c_{j}=1$. This relates to the definition if $\underline{0}\in I$ in which case we could assert that $1=\sum_{j\in I,\, j\geq \underline{0}}c_{j}$. 
The motivation for the definition is that this property is satisfied for dimension adaptive sparse grids~\cite{hegland_asg}. 
Let $P_{i}:V\mapsto V_{i}$ be a lattice of projection operators associated with the tensor product space $V$. 
As in $P_{i}$ satisfies $P_{i}P_{j}=P_{i\wedge j}$ (where $(i\wedge j)_{k}=\min\{i_{k},j_{k}\}$), $P_{i}P_{j}=P_{j}P_{i}$ and $P_{i}P_{i}=P_{i}$. 
Defining $P_{I}:V\mapsto V^{s}_{I}$ it follows from~\cite[p. C344]{hegland_asg} that for $i\in I$
\begin{equation*}
P_{i}P_{I}=P_{i}(1-\prod_{j\in I}(1-P_{j}))=P_{i}-P_{i}(1-P_{i})\prod_{j\in I\backslash\{i\}}(1-P_{j})=P_{i} \,.
\end{equation*} 
Since $P_{I}=\sum_{i\in I}c_{i}P_{i}$ it follows that $\sum_{\{j\in I\text{ s.t. }j\wedge i=i\}}c_{i}=\sum_{j\in I,\,j\geq i}c_{i}=1$.

For a given $I$ there are a many sets of valid combination coefficients that one
might take. To determine which of these {\em a priori} will give the best
approximation of $u$ we use approximations which are based on sparse grid
interpolation. It is reasonable to expect that such combinations will
work well in a more general setting given the underlying
inclusion{\slash}exclusion principle. 
The error estimate~\eqref{eqn:ccterr} for sparse grid interpolation is extended to the general combination technique by
$$
\|u-u^{c}_{I}\|_{2}\leq 3^{-d}|u|_{H^{2}_{\text{mix}}}\sum_{i\in\mathbb{N}^{d}}\left( 4^{-\|i\|_{1}}\left|1-\sum_{j\in I,\, j\geq i}c_{j}\right| \right) \,.
$$
Finding a set of valid coefficients which minimises this this bound is then equivalent to maximising
\begin{equation}\label{eqn:appqual}
Q(\{c_{i}\}_{i\in I}):=\sum_{i\in I\downarrow}4^{-\|i\|_{1}}\sum_{j\in I,\, j\geq i}c_{j} \,.
\end{equation}

Therefore, the general problem of finding the best combination of solutions
$u_{i}$ for $i\in I$ can be formulated as an optimisation problem, in particular
the maximisation of $Q(\{c_{i}\}_{i\in I})$ subject to the constraints
$\sum_{j\in I,\, j\geq i}c_{j}\in\{0,1\}$ for each $i\in I$. Since the $c_{i}$
must be integers this would be a simple integer linear programming (ILP)
problem if not for the non-trivial constraints. (To see why the $c_{i}$ must be
integers, we note that the non-zero $c_{i}$ for which $c_{j}=0$ for all $j>i$
must be $1$ in order for the set of coefficients to be valid. The remaining
coefficients are now obtained from the application of the
inclusion{\slash}exclusion principle which can only result in integer
coefficients.)

Fortunately we can simplify this by introducing the hierarchical coefficient.
\begin{definition}
Let $I$ be a set of multi-indices, then for $i\in I\downarrow$ we define the {\bf\em hierarchical coefficient}
\begin{equation}\label{eqn:wcm}
w_{i}:=\sum_{j\in I,\, j\geq i}c_{j} \,.
\end{equation}
\end{definition}

Suppose we expand our list of coefficients to the set $\{c_{i}\}_{i\in
I\downarrow}$ with the assumption $c_{i}=0$ for $i\notin I$. Now let $c,\,w$ be
vectors for the sets $\{c_{i}\}_{i\in I\downarrow},\,\{w_{i}\}_{i\in
I\downarrow}$, respectively (both having the same ordering with respect to $i\in
I\downarrow$). Using \eqref{eqn:wcm} we can write $w=Mc$ where $M$ is an
$|I\downarrow|\times|I\downarrow|$ matrix. Further, we note that if the elements
of $c,\,w$ are ordered according to ascending or descending values of
$\|i\|_{1}$ then $M$ is an upper or lower triangular matrix respectively
with $1$'s on the diagonal. Therefore $M$ is invertible and we have $c=M^{-1}w$.
Additionally, the restriction that $c_{i}=0$ for $i\notin I$ can be written as
$(M^{-1}w)_{i}=0$.

Since $w_{i}\in\{0,1\}$ for any set of valid coefficients we can formulate the {\em general
coefficient problem} (GCP) as the binary integer programming (BIP) problem of
maximising
\begin{equation}\label{eqn:aqw}
Q'(w):=\sum_{i\in I\downarrow}4^{-\|i\|_{1}}w_{i} 
\end{equation}
with the equality constraints $(M^{-1}w)_{i}=0$ for $i\notin I$. This is much
more manageable in practice and can be solved using a variety of algorithms that
are typically based on branch and bound, and/or cutting plane techniques.
However, this formulation also reveals that the general coefficient problem is
NP-complete~\cite{karp1972}. 
Equivalently, one can also think of this as a
weighted maximum satisfiability problem (Weighted MAX-SAT). If $I$ is a downset
then we can solve this rather quickly, but in general there exist cases which
take an incredibly long time to solve.
Another problem one runs into is that there is often not a unique solution. In
such circumstances we will simply pick any one of the solutions as they cannot
be further distinguished without additional information about $u$.

The only way to guarantee that a solution can be found quickly is to carefully
choose the index set $I$. One particular class of index sets of interest are
those which are closed under the $\wedge$ operator, that is if $i,j\in I$ then
$i\wedge j\in I$. In the theory of partially ordered sets, $(I,\leq)$ with this
property is referred to as a lower semi-lattice.
For such $I$ there is a unique solution to the GCP, namely $w_{i}=1$ for all
$i\in I\downarrow$ which clearly maximises~\eqref{eqn:aqw}. Computationally the
coefficients can be found quickly by first finding $\max I:=\{i\in I:\nexists
j\in I \text{ s.t. } j>i\}$, setting $c_{i}=1$ for $i\in\max I$, and then using
the inclusion{\slash}exclusion principle to find the remaining coefficients in
the order of descending $\|i\|_{1}$. This can also be viewed as an application
of the lattice theory of projections on function spaces presented by
Hegland~\cite{hegland_asg}.

Whilst the GCP presented is based upon $u\in H^{2}_{mix}$ we anticipate that the
resulting combinations will still yield reasonable results for larger function
spaces. This is based on the observation that the classical combination
technique has been successfully applied to a wide variety of problems for which
$u\notin H^{2}_{\text{mix}}$.

\begin{remark}
The restriction of $w_{i}$ to binary variables can be relaxed to
reals if we change the quantity we intend to optimise. The important observation
to make here is that one would expect having two contributions from a
hierarchical space is comparable to having no contributions. Further having a
fractional contribution like $\frac{1}{2}$ would be better than having no
contribution at all. In light of this we can try to solve the linear programming
problem of minimising
\begin{equation}\label{eqn:omwmin}
\sum_{i\in I\downarrow}4^{-\|i\|_{1}}|1-w_{i}|
\end{equation}
subject to the equality constraints $(M^{-1}w)_{i}=0$ for $i\notin I$. If $I$ is
closed under $\wedge$ then a minimum of $0$ is achieved for the same
hierarchical coefficients found in the binary formulation. In other cases the
solution to this relaxed optimisation problem is no worse than the solution to
the binary problem. This relaxation means the combination may no longer follow
an inclusion{\slash}exclusion principle. In practice the results are highly
dependant upon the true solution $u$ and the approximation properties of each of
the $u_{i}$. Additionally, the non-differentiability of equation \eqref{eqn:omwmin} means
that the problem is still non-trivial to solve in practice.


An approach that can sometimes speed up the computation of a solution to this
problem is to first find the solution to the quadratic programming problem of
minimising $$ \sum_{i\in I\downarrow}4^{-\|i\|_{1}}(1-w_{i})^{2} $$ subject to
the same equality constraints. This is a linear problem which is easily solved
using the method of Lagrange multipliers for example. In most circumstances we
would expect the solution of this problem to be close to the minimum of equation
\eqref{eqn:omwmin} and therefore make a good initial guess.
\end{remark}

\section{Fault Tolerant Combination Technique and Fault Simulation}\label{sec:faults} 


\subsection{Fault Tolerant Combination Technique (FTCT)}\label{sec:ftct}

In~\cite{my_ctac_paper,my_sga_paper} 
a fault tolerant combination technique was introduced.  
The most difficult aspect of generalising this work is the
updating of coefficients. Whilst some theory and a few simple cases have been
investigated, no general algorithm has been presented. 
Given the development of the general combination technique in Section~\ref{sec:gct} 
we are now able to consider a more complete theory of the FTCT.

Suppose we have a set $I$ of multi-indices for which we intend to
compute each of the solutions $u_{i}$ and combine as in \eqref{eqn:gct}. As each of
the $u_{i}$ can be computed independently the computation of these is easily 
distributed across different nodes of a high performance computer. 
Suppose that one or more of these nodes experiences a fault, hardware or
software in nature. As a result, some of our $u_{i}$ may not have been computed
correctly. We denote $J\subset I$ to be the set of indices for which the $u_{i}$
where not correctly computed. A lossless approach to fault tolerance would be to
recompute $u_{i}$ for $i\in J$. However, since recomputation is often costly,
we propose a lossy
approach to fault tolerance in which the failed solutions are not recomputed. In
this approach, rather than solving the generalised coefficient problem (GCP) for $I$,
we instead solve it for $I\backslash J$. As ${(I\backslash J)\downarrow}
\subseteq {I\downarrow}$ we expect this solution to have a larger error than
that if no faults had occurred. However, if $|J|$ is relatively small we would
also expect the loss of accuracy to be small because of the redundancy in the 
set of $\{u_{i}\}_{i\in I}$. 

As discussed in Section~\ref{sec:gct}, the GCP is difficult to solve in its
most general form. Whilst it can be solved rather quickly if the poset
$(I,\leq)$ is a lower semi-lattice, this is no longer any help in the FTCT since
the random nature of faults means we cannot guarantee that $(I\backslash
J,\leq)$ is always a lower semi-lattice. The only way we could ensure this is 
to restrict which elements of $I$ can be in $J$. A simple way
to achieve this is to recompute missing $u_{i}$ 
if $(I\backslash\{i\},\leq)$ is not a lower semi-lattice. In particular this
is achieved if all $u_{i}$ with $i\notin\max I$ are recomputed. Since elements in $\max I$
correspond to the solutions on the largest of the grids, we are 
avoiding the recomputation of the solutions which take the longest to
compute. 
Additionally, this also means only the largest of the
hierarchical spaces are ever omitted as a result of a failure. As these
contribute the least to the solution we expect the resulting error to be
relatively close to that of the solution if no faults had occurred. Finally,
since $(I\backslash J,\leq)$ is then a lower semi-lattice, the resulting GCP for
$I\backslash J$ has a unique maximal solution which is easily computed.

We now illustrate this approach as it is applied to the classical
combination technique. We define $I_{n}=\{i\in\mathbb{N}^{d}:\|i\|_{1}\leq n\}$.
It was shown in~\cite{my_ctac_paper} that the proportion of additional unknowns 
in computing the
solutions $u_{i}$ for all $i\in I_{n}$ compared to $n-d<\|i\|_{1}\leq n$ is at
most $\frac{1}{2^{d}-1}$. 
If no faults occur then the combination is exactly the
classical combination technique with $c_{i}=(-1)^{n-\|i\|_{1}}\binom{d-1}{
n-\|i\|_{1}}$ if $n-d<\|i\|\leq n$ and $c_{i}=0$ otherwise. If faults do occur
then we recompute any $u_{i}$ with $\|i\|_{1}<n$ that was not successfully
computed. If no faults occurred for any $u_{i}$ with $\|i\|=n$ then we can again
proceed with the classical combination. If faults affect any $u_{i}$ with
$\|i\|_{1}=n$ then we add such $i$ to the set $J$ and then solve the GCP for
$I_{n}\backslash J$. The solution is trivially obtained with hierarchical coefficients 
$w_{i}=1$ for all $i\in I_{n}\backslash J$.

The largest solutions (in terms of unknowns) which may have to be recomputed 
are those with 
$\|i\|_{1}=n-1$ which would be expected to take at most half the time of those
solutions with $\|i\|_{1}=n$. Since they take less time to compute they are also
less likely to be lost due to failure. Additionally, there are $\binom{n-1+d-1}{d-1}$ 
solutions with $\|i\|_{1}=n-1$ which is less than the $\binom{n+d-1}{d-1}$
with $\|i\|_{1}=n$. As a result of these observations, we would expect to see
far less disruptions caused by recomputations when using this approach compared
to a lossless approach where all failed solutions are recomputed.

The worst case scenario with this approach is that all $u_{i}$ with $\|i\|_{1}=n$ 
are not successfully computed due to faults. In this case the resulting combination is 
simply a classical combination of level $n-1$. This only requires the solutions
$u_{i}$ with $n-d\leq \|i\|\leq n-1$. Likewise, all solutions to the GCP in this
approach result in zero coefficients for all $c_{i}$ with $i< n-d$. We can
therefore reduce the overhead of the FTCT by only computing the solutions
$u_{i}$ for $n-d\leq\|i\|_{1}\leq n$. It is known that the proportion of additional 
unknowns compared to the classical combination technique 
in this case is at most $\frac{1}{2(2^{d}-1)}$~\cite{my_ctac_paper}.

The solutions $u_{i}$ with $\|i\|_{1}=n-1$ are only half the size of the largest $u_{i}$ and hence recomputation of these may also be disruptive and undesirable. 
We could therefore consider recomputing only solutions with $\|i\|_{1}\leq n-2$. 
By doing this the recomputations are even more manageable having at most 
one quarter the unknowns of the largest $u_{i}$.
The worst case here is that all solutions with $i\geq n-1$ fail and we
end up with a classical combination of level $n-2$. Again it turns out 
one does not require the entire downset $I_{n}$, in this case the (modified) FTCT 
requires solutions $u_{i}$ with $n-d-1\leq\|i\|_{1}\leq n$. 
Using arguments similar to those
in~\cite{my_ctac_paper} it is easily shown that the overhead in this case is at most 
$\frac{3}{4(2^{d}-1)}$. The trade-off now is that the update of coefficients
takes a little more work. We are back in the situation where we cannot guarantee
that $(I_{n}\backslash J,\leq)$ is a lower semi-lattice.

To solve the GCP in this case we start with all $w_{i}$ equal to $1$. If
failures affected any $u_{i}$ with $\|i\|_{1}=n$ we set the corresponding
constraints $c_{i}=w_{i}=0$.
For failures occurring on $u_{i}$ with $\|i\|_{1}=n-1$ we have the constraints
$w_{i}-\sum_{k=1}^{d}w_{i+e^{k}}=0$ (with $e^{k}$ being the multi-index with
$e^{k}_{l}=\delta_{k,l}$). We note that (since the $w_{i}$ are binary variables)
this can only be satisfied if at most one of the $w_{i+e^{j}}$ is equal to $1$.
Further, if $\sum_{k=1}^{d}w_{i+e^{k}}=0$ we must also have $w_{i}=0$. This
gives us a total of $d+1$ feasible solutions to check for each such constraint. 
Given $g$ failures on solutions with $\|i\|_{1}=n-1$ we have at most 
$(d+1)^{g}$ feasible solutions to the GCP to check. 
This can be kept manageable if solutions are combined frequently enough that 
the number of failures $g$ that are likely occur in between is small.
One solves the GCP by computing the objective function~\eqref{eqn:aqw} for each 
of the feasible solutions identified and selecting one which maximises this. 
Where some of the failures on the second layer are sufficiently far apart on the
lattice, it is possible to significantly reduce the number of cases to check as 
constraints can be optimised independently. 

We could continue and describe an algorithm for only recomputing the fourth
layer and below, however the coefficient updates here begin to become much more
complex (both to describe and to compute). Our experience indicates that the
recomputation of the third layer and below is a good trade-off between the need
to recompute and the complexity of updating the coefficients. Our numerical results in Section~\ref{sec:numres} are obtained using this approach.

\subsection{Probability of failure for computations}\label{sec:pfc}

To analyse the expected outcome of the fault tolerant combination technique described in 
Section~\ref{sec:ftct} we need to know the probability of each $u_{i}$ failing. 
In particular, the availability of $u_{i}$ will be modelled as a simple Bernoulli 
process $U_{i}$ which is $0$ if $u_{i}$ was computed successfully and is $1$ otherwise. 
It is assumed that each $u_{i}$ is computed on a single computational node. 
Therefore we are interested in the probability that a failure occurs on this node before 
the computation of $u_{i}$ is complete, that is $\Pr(U_{i}=1)$. 
Suppose $T$ is a random variable denoting the time to failure on a given node and the time 
required to compute $u_{i}$ is given by $t_{i}$, then one has 
$\Pr(U_{i}=1)=\Pr(T\leq t_{i})$. 
One therefore needs to know something about the distribution of $T$.

Schroeder and Gibson analysed the occurrence of faults on $22$ high
performance machines at LANL from 1996-2005~\cite{schroeder_gibson}. They found
that the distribution of time between failures for a typical node in the
systems studied was best fit by the Weibull distribution with a shape parameter of
$0.7$. 
Based upon this study we will consider a model of faults on each node based upon the Weibull 
renewal process, that is a renewal process where inter-arrival times are Weibull distributed, 
with shape parameter $0<\kappa\leq 1$.

There are several reasons for considering a renewal process for modelling faults. 
First, renewal theory is commonly used in availability analysis and there are many 
extensions such as alternating renewal processes in which one can also consider repair times. 
Second, we expect a fault tolerant implementation of {\sc mpi} to enable the substitution 
of a failed node with another available node in which case computation can continue from 
some recovered state. This will be further discussed in Section~\ref{sec:fs}.
We now derive the value of $\Pr(U_{i}=1)$. 

Let $\{X_{k}\}_{k=1}^{\infty}$ be random variables for the successive times between failures on a node. 
We assume that the $X_{k}$ are positive, independent and identically distributed with cumulative distribution
\begin{equation}\label{eqn:wrv}
F(t):=\Pr(X_{k}\leq t)
=1-e^{-(t/\lambda)^{\kappa}} 
\end{equation}
for some $0<\lambda<\infty$ and $0<\kappa\leq1$. 
Let $S_{k}=\sum_{m=1}^{k}X_{m}$ (for $k\geq1$) be the waiting time to the $k$th failure. 
Let $N(t)$ count the number of failures that have occured up until (and including) time $t$, 
that is $N(t)=\max\{k:S_{k}\leq t\}$). By the elementary renewal theorem one has 
\begin{equation}\label{eqn:ert}
\lim_{t\rightarrow\infty}\frac{1}{t}\mathbb{E}[N(t)]=\frac{1}{\mathbb{E}[X_{1}]}=\frac{1}{\lambda\Gamma(1+\frac{1}{\kappa})} \,.
\end{equation}
We thus get an expression for the (long term) average rate of faults. 

Now, whilst we have a distribution for the time between failures, when a 
computation starts it is generally unknown how much time has elapsed since the 
last failure occurred. Hence our random variable $T$ is what is referred to as the 
random incidence (or residual lifetime). Noting that one is more likely to intercept 
longer intervals of the renewal process than shorter ones and that the probability 
distribution of the starting time is uniform over the interval, 
then it is straightforward to show~\cite{trivedi} that $T$ has density 
$$
g(t)=\frac{1-F(t)}{\mathbb{E}[X_{1}]}=\frac{e^{-(t/\lambda)^{\kappa}}}{\lambda\Gamma(1+\frac{1}{\kappa})} \,.
$$
It follows that the cumulative probability distribution 
$$
G(t_{i}):=\Pr(T\leq t_{i})
=\frac{1}{\lambda\Gamma(1+\frac{1}{\kappa})}\int_{0}^{t_{i}}e^{-(x/\lambda)^{\kappa}}dx \,.
$$
The resulting distribution has similar properties to the original Weibull distribution. 
In fact, when $\kappa=1$ we note that $X_{k}$ and $T$ are identically distributed, 
they are exponential with mean $\lambda$. 
Further, for $0<\kappa\leq 1$ we have the following bound:
\begin{lemma}\label{lem:rib}
For $0<\kappa\leq 1$ one has
\begin{equation}\label{eqn:ribbw}
G(t)\leq F(t) \,.
\end{equation}
\end{lemma}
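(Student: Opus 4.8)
The plan is to recast the claim in terms of survival functions and reduce it to an elementary subadditivity property of the map $t\mapsto t^{\kappa}$. Writing $\bar{F}(t)=1-F(t)=e^{-(t/\lambda)^{\kappa}}$ and $\mu=\mathbb{E}[X_{1}]=\lambda\Gamma(1+\tfrac{1}{\kappa})$, I first note the standard identity $\mu=\int_{0}^{\infty}\bar{F}(x)\,dx$, so that the residual-lifetime distribution satisfies $1-G(t)=\tfrac{1}{\mu}\int_{t}^{\infty}\bar{F}(x)\,dx$. The inequality $G(t)\leq F(t)$ is therefore equivalent to $1-G(t)\geq\bar{F}(t)$, i.e.\ to
$$
\int_{t}^{\infty}\bar{F}(x)\,dx\;\geq\;\bar{F}(t)\int_{0}^{\infty}\bar{F}(x)\,dx \,.
$$

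Next I would substitute $x=t+s$ in the left-hand integral, reducing the goal to
$$
\int_{0}^{\infty}\bar{F}(t+s)\,ds\;\geq\;\int_{0}^{\infty}\bar{F}(t)\bar{F}(s)\,ds \,,
$$
which follows at once from the pointwise inequality $\bar{F}(t+s)\geq\bar{F}(t)\bar{F}(s)$ for all $s\geq0$. Taking logarithms, this pointwise bound is exactly $((t+s)/\lambda)^{\kappa}\leq(t/\lambda)^{\kappa}+(s/\lambda)^{\kappa}$, that is, the subadditivity $(a+b)^{\kappa}\leq a^{\kappa}+b^{\kappa}$ with $a=t/\lambda$ and $b=s/\lambda$.

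Finally I would verify this subadditivity directly. Since $0<\kappa\leq1$, the function $\phi(x)=x^{\kappa}$ is concave on $[0,\infty)$ (its second derivative $\kappa(\kappa-1)x^{\kappa-2}$ is nonpositive) and satisfies $\phi(0)=0$; writing $a=\tfrac{a}{a+b}(a+b)+\tfrac{b}{a+b}\cdot 0$ and using concavity gives $\phi(a)\geq\tfrac{a}{a+b}\phi(a+b)$, and symmetrically $\phi(b)\geq\tfrac{b}{a+b}\phi(a+b)$, so that $\phi(a)+\phi(b)\geq\phi(a+b)$. This is the required subadditivity, and the case $\kappa=1$ gives equality throughout, consistent with the fact that $F$ and $G$ coincide there.

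I do not expect a genuine obstacle: the only real step is spotting the reduction to the ``new-better-than-used'' property $\bar{F}(t+s)\geq\bar{F}(t)\bar{F}(s)$, after which everything is elementary. An alternative, purely calculus-based route would be to set $H=F-G$, observe $H(0)=0=\lim_{t\to\infty}H(t)$, and deduce from $H'(t)=e^{-(t/\lambda)^{\kappa}}\bigl(\tfrac{\kappa}{\lambda}(t/\lambda)^{\kappa-1}-\tfrac{1}{\mu}\bigr)$ that $H'$ changes sign exactly once, from $+$ to $-$, since $(t/\lambda)^{\kappa-1}$ is decreasing for $\kappa<1$; unimodality together with the two vanishing endpoints then forces $H\geq0$. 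The subadditivity argument is cleaner, so I would present that as the main proof.
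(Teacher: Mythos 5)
Your proof is correct and is essentially the paper's own argument: both reduce the claim to the subadditivity $(s+t)^{\kappa}\leq s^{\kappa}+t^{\kappa}$ for $0<\kappa\leq 1$ (equivalently, the pointwise bound $\bar{F}(s+t)\geq\bar{F}(s)\bar{F}(t)$ with $\bar{F}(x)=e^{-(x/\lambda)^{\kappa}}$), integrated against the identity $\lambda\Gamma(1+\tfrac{1}{\kappa})=\int_{0}^{\infty}e^{-(x/\lambda)^{\kappa}}\,dx$. The only difference is bookkeeping: the paper derives that identity inline by a change of variables and integration by parts and merely asserts the subadditivity, whereas you quote the identity as the standard tail formula for the mean and instead supply the concavity proof of subadditivity.
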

\begin{proof}
We note that via a change of variables that
$$
\Gamma\left(1+\frac{1}{\kappa}\right)=\int_{0}^{\infty}y^{\frac{1}{\kappa}}e^{-y}dy=\int_{0}^{\infty}\frac{\kappa}{\lambda}\left(\frac{x}{\lambda}\right)^{\kappa}e^{-(x/\lambda)^{\kappa}} dx
$$
and therefore
\begin{align*}
\Gamma\left(1+\frac{1}{\kappa}\right)e^{-(t/\lambda)^{\kappa}}
&=\int_{0}^{\infty}\frac{\kappa}{\lambda}\left(\frac{x}{\lambda}\right)^{\kappa}e^{-(x/\lambda)^{\kappa}}e^{-(t/\lambda)^{\kappa}} dx \\
&= \int_{0}^{\infty}\frac{x}{\lambda}\frac{\kappa}{\lambda}\left(\frac{x}{\lambda}\right)^{\kappa-1}e^{-(x/\lambda)^{\kappa}}e^{-(t/\lambda)^{\kappa}} dx \\
&= \left[-\frac{x}{\lambda}e^{-(x/\lambda)^{\kappa}}e^{-(t/\lambda)^{\kappa}}\right]_{0}^{\infty} -\int_{0}^{\infty}-\frac{1}{\lambda}e^{-(x/\lambda)^{\kappa}-(t/\lambda)^{\kappa}} dx \\
&=\int_{0}^{\infty}\frac{1}{\lambda}e^{-(x/\lambda)^{\kappa}-(t/\lambda)^{\kappa}} dx \,.
\end{align*}
Since $0<\kappa\leq 1$ and $t,x\geq 0$ one has $x^{\kappa}+t^{\kappa}\geq(x+t)^{\kappa}$ and hence
\begin{align*}
\Gamma\left(1+\frac{1}{\kappa}\right)e^{-(t/\lambda)^{\kappa}}
&\leq\int_{0}^{\infty}\frac{1}{\lambda}e^{-((x+t)/\lambda)^{\kappa}} dx \\
&=\int_{t}^{\infty}\frac{1}{\lambda}e^{-(x/\lambda)^{\kappa}} dx \\
&=\int_{0}^{\infty}\frac{1}{\lambda}e^{-(x/\lambda)^{\kappa}} dx -\int_{0}^{t}\frac{1}{\lambda}e^{-(x/\lambda)^{\kappa}} dx\\
&=\Gamma\left(1+\frac{1}{\kappa}\right)-\frac{1}{\lambda}\int_{0}^{t}e^{-(x/\lambda)^{\kappa}} dx \,.
\end{align*}
Rearranging gives
$$
\frac{1}{\lambda\Gamma(1+\frac{1}{\kappa})}\int_{0}^{t}e^{-(x/\lambda)^{\kappa}}dx \leq 1-e^{-(t/\lambda)^{\kappa}} 
$$
which is the desired inequality.
\qquad\end{proof}

As a result of Lemma~\ref{lem:rib} one has that the probability of $u_{i}$ failing to compute successfully is bounded above by
$$
\Pr(U_{i}=1)=G(t_{i})\leq F(t_{i}) \,.
$$

\begin{remark}
The result of Lemma~\ref{lem:rib} is essentially a consequence of the 
property 
\begin{equation}\label{eqn:wpb}
\Pr(X\leq s+t \mid X>s)\leq \Pr(X\leq t) 
\end{equation}
where $X$ is Weibull distributed with shape parameter $0<\kappa\leq 1$ and $s,t\geq 0$.
This property can be extended to the fact that if $s_{2}\geq s_{1}\geq0$ then 
$$ 
\Pr(X\leq s_{2}+t \mid
X\geq s_{2})\leq\Pr(X\leq s_{1}+t \mid X\geq s_{1}) \,.
$$ 
This has important implications on the order in which we compute successive solutions on a single node. 
Solutions one is least concerned about not completing due to a fault should 
be computed first and solutions for which we would like to minimise the chance of failure should be computed later. 
\end{remark}
\begin{remark}
We note that for $\kappa\geq 1$ the inequalities of Equations~\eqref{eqn:ribbw} 
and~\eqref{eqn:wpb} are reversed thus $G(t)\geq F(t)$ and 
$\Pr(X\leq s+t \mid X>s)\geq \Pr(X\leq t)$ for $s,t\geq 0$.
\end{remark}

\subsection{Fault Simulation in the FTCT algorithm}\label{sec:fs}

We first describe the parallel FTCT algorithm:
\begin{itemize}
\item[1.] 
Given a (finite) set of multi-indices $I$, distribute the computation of 
component solutions $u_{i}$ for $i\in I$ amongst the available nodes. 
\item[2.] 
Each node begins to compute the $u_{i}$ which have been assigned to it. 
For time evolving {\sc pde}'s the solvers are evolved for some fixed simulation time $t_{s}$.
\item[3.] 
On each node, once a $u_{i}$ is computed a checkpoint of the result is saved. 
If the node experiences a fault during the computation of a $u_{i}$, a fault tolerant implementation of {\sc mpi} is used to replace this node with another 
(or continue once the interrupted node is rebooted). 
On the new node, checkpoints of previously computed $u_{i}$ are loaded and we then assess 
whether the interrupted computation should be recomputed or discarded. If it is to be 
recomputed this is done before computing any of the remaining $u_{i}$ allocated to the node.
\item[4.] 
Once all nodes have completed their computations they communicate which $u_{i}$ 
have been successfully computed via a {\sc mpi\_allreduce}. All nodes now have a list of 
multi-indices $I'\subseteq I$ and can solve the {\sc gcp} to obtain combination coefficients.
\item[5.] 
All nodes compute a partial sum $c_{i}u_{i}$ for the $u_{i}$ that it has computed
and then the sum is completed globally via a {\sc mpi\_allreduce} such that 
all nodes now have a copy of $u^{\text{gcp}}_{I'}$.
\item[6.] 
In the case of a time evolving {\sc pde} the $u_{i}$ can be sampled from 
$u^{\text{gcp}}_{I'}$ and the computation further evolved by repeating from 2.
\end{itemize}

The optional step 6 for time evolution problems 
has many advantages. First, by combining component 
solutions several times throughout the computation one can improve the approximation 
to the true solution. Second, each combination can act like a global checkpoint such that 
when a $u_{i}$ fails it can be easily restarted from the last combination 
(rather than the very beginning). Third, there are potential opportunities to reassess 
the load balancing after each combination and potentially re-distribute the $u_{i}$ 
to improve performance in the next iteration.

For the numerical results in Section~\ref{sec:numres} we do not currently use a fault tolerant implementation of {\sc mpi} and instead simulate faults by modifying the following steps:
\begin{itemize}
\item[2.] 
Before each node begins computing the $u_{i}$ assigned to it, a process on the node computes a (simulated) time of failure by sampling a distribution for time to failure and adding it to the current time. In our results we sample the Weibull distribution for some mean $\lambda>0$ and shape $0<\kappa\leq 1$. 
\item[3.]
Immediately after a $u_{i}$ has been computed on a node we check to see if the current time has exceeded the (simulated) time of failure. If this is the case the most recent computation is discarded. We then pretend that the faulty node has been instantly replaced and continue with step 3 as described.
\end{itemize}


Note from Section~\ref{sec:pfc} that sampling the Weibull distribution produces faults at least as often as the random incidence $G(t)$. Thus, by sampling the Weibull distribution in our simulation, the results should be no worse than what might occur in reality allowing for some small discrepancy in fitting the Weibull distribution to existing data of time between failures on nodes of a real machine. 

The assumption in step 3 of replacing a failed node with another is based upon what 
one might expect from a fault tolerant {\sc mpi}. In fact both Harness FT-MPI\footnote{http://icl.cs.utk.edu/ftmpi/} 
and the relatively new ULFM specification\footnote{http://fault-tolerance.org/} allow this, 
although it certainly does not occur in an instant as is assumed in our simulation. 
Due to limited data at the current time we are unable to predict what recovery times 
one might expect. 
We also note that (simulated) failures are checked for at the completion of the computation of 
each $u_{i}$. Since a failure is most likely to occur some time before the computation 
completes then time is wasted in the simulation from the sampled time of failure to 
the completion of the affected computation. 
Improving these aspects of the simulation and implementation with a fault tolerant {\sc mpi} 
will be the subject of future work.

\subsection{Expected error of the FTCT}\label{sec:iea}

In this section, we bound the expected interpolation error for the {\sc ftct} as
applied to the classical combination technique as described in
Section~\ref{sec:ftct}. In particular we look at the case where all solutions
with $\|i\|_{1}<n$ are recomputed, and the case where all solutions with
$\|i\|_{1}<n-1$ are recomputed as described in Section~\ref{sec:ftct}. 

Given $u\in H^{2}_{\text{mix}}$, let
$$
\epsilon_{n}:=\frac{1}{3}\cdot 3^{-d} 2^{-2n} |u|_{H^{2}_{\text{mix}}}\sum_{k=0}^{d-1}\binom{n+d}{k}\left(\frac{1}{3}\right)^{d-1-k} 
$$
such that  $\|u-u^{c}_{n}\|_{2}\leq \epsilon_{n}$, see~\eqref{eqn:ccterr}. 
Now given a (finite) set of multi-indices $I$ we denote $u^{\text{gcp}}_{I}$ to be a
combination $\sum_{i\in I}c_{i}u_{i}$ 
which is a solution to the {\sc gcp} described in Section~\ref{sec:gct}. 
When faults prevent successful computation of some of the $u_{i}$ we must 
find $u^{\text{gcp}}_{I'}$ for some $I'\subset I$.
Consider the Bernoulli process $\{U_{i}\}_{i\in I}$ for 
which each $U_{i}=0$ if $u_{i}$ is computed successfully and is $1$ otherwise 
as described in Section~\ref{sec:pfc}. 
Additionally it is assumed that the computation of each $u_{i}$ is done within one node, 
that is many $u_{i}$ can be computed simultaneously on different nodes but each individual 
$u_{i}$ is computed within one hardware node. 
Let $t_{i}$ be the time required to compute $u_{i}$ for each $i\in I$. 
We assume that the time between failures on each node is Weibull distributed. 
As demonstrated in Section~\ref{sec:pfc} the probability of each $u_{i}$ being lost 
a the result of a fault is given by the random incidence distribution
$$
\Pr(U_{i}=1)=G(t_{i})\leq F(t_{i}) \,.
$$ 
Given that $u_{i}$ with the same $\|i\|_{1}$ have a similar number of unknowns 
we assume 
they will take roughly the same amount of time to compute. 
We therefore define $t_{k}:=\max_{\|i\|_{1}=k}t_{i}$, that is the maximal
time to compute any $u_{i}$ with level $k$.

As a result, for each $i\in I$ the probability of each $u_{i}$ 
not completing due to failures is bounded by 
$$
\Pr(U_{i}=1)\leq G(t_{\|i\|_{1}})\leq F(t_{\|i\|_{1}}) \,.
$$.
With this we can now give the main result. 
\begin{proposition}\label{prop:err1}
Given $d,n>0$ and $I_{n}:=\{ i\in\mathbb{N}^{d}:\|i\|_{1}\leq n\}$ 
let $u_{i}$ be the interpolant of $u\in H^{2}_{\text{mix}}$ for $i\in I_{n}$. 
Let each $u_{i}$ be computed on a different node of a parallel computer 
for which the time between failures on every node is independent and identically 
Weibull distributed with mean $\lambda>0$ and shape parameter $0<\kappa\leq 1$.
Let $t_{i}$ be the (wall) time required to compute each $u_{i}$ 
and $t_{n}=\max_{\|i\|_{1}=n}t_{i}$. 
Suppose we recompute any $u_{i}$ with $\|i\|_{1}<n$ which is interrupted by a fault, 
let $\mathcal{I}$ be the set of all possible $I'\subseteq I_{n}$ for which $u_{i}$ was 
successfully computed (eventually) iff $i\in I'$. 
Let $u^{\text{gcp}}_{\mathcal{I}}$ be the function-valued random variable corresponding to 
the result of the {\sc ftct} (i.e. $u^{\text{gcp}}_{I'}$ for some random $I'\in\mathcal{I}$),
then the expected error is bounded above by
\begin{equation*}
\mathbb{E}\left[\|u-u^{\text{gcp}}_{\mathcal{I}}\|_{2}\right]
\leq\epsilon_{n}\left(1+3\left(1-e^{-(t_{n}/\lambda)^{\kappa}}\right)\right) \,.
\end{equation*}
\end{proposition}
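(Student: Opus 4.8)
\section*{Proof proposal}

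The plan is to bound the error pathwise, for each realised survivor set $I'\in\mathcal{I}$, and only then take expectations. First I would record the structural fact that, because every $u_{i}$ with $\|i\|_{1}<n$ is recomputed until it succeeds, the only indices that can be lost are those on the top layer $\{i:\|i\|_{1}=n\}$. Writing $J:=\{i:\|i\|_{1}=n\}\setminus I'$ for this (random) failed set, the survivor set is $I'=I_{n}\setminus J$ with $J\subseteq\max I_{n}$. Since $I_{n}$ is a downset it is closed under $\wedge$, and removing only maximal elements preserves this (if $i,j\in I_{n}\setminus J$ then $\|i\wedge j\|_{1}\le n$, and equality forces $i\wedge j\in\{i,j\}$), so $(I',\le)$ is a lower semi-lattice. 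By the discussion in Section~\ref{sec:gct}, the {\sc gcp} for such an $I'$ has the unique solution $w_{i}=1$ for every $i\in I'\downarrow$.

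Next I would feed this into the general error estimate from Section~\ref{sec:gct}, $\|u-u^{\text{gcp}}_{I'}\|_{2}\le 3^{-d}|u|_{H^{2}_{\text{mix}}}\sum_{i\in\mathbb{N}^{d}}4^{-\|i\|_{1}}|1-w_{i}|$. With $w_{i}=1$ on $I'\downarrow$ and $w_{i}=0$ off it, the summand vanishes except on the complement of $I'\downarrow$. The key combinatorial observation is to identify this complement: an index $i$ fails to lie in $I'\downarrow$ exactly when $\|i\|_{1}>n$ (nothing in $I_{n}$ dominates it) or when $\|i\|_{1}=n$ and $i\in J$ (its only dominator in $I_{n}$ is itself, which was lost). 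Hence
$$
\|u-u^{\text{gcp}}_{I'}\|_{2}\le 3^{-d}|u|_{H^{2}_{\text{mix}}}\Bigl(\sum_{\|i\|_{1}>n}4^{-\|i\|_{1}}+4^{-n}|J|\Bigr).
$$
The first sum is precisely $\epsilon_{n}$: counting the $\binom{k+d-1}{d-1}$ indices of each level $k$ reproduces the series in \eqref{eqn:ccterr}. Thus pathwise $\|u-u^{\text{gcp}}_{I'}\|_{2}\le\epsilon_{n}+3^{-d}|u|_{H^{2}_{\text{mix}}}4^{-n}|J|$.

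I would then take expectations. Since $|J|=\sum_{\|i\|_{1}=n}U_{i}$, linearity gives $\mathbb{E}[|J|]=\sum_{\|i\|_{1}=n}\Pr(U_{i}=1)$, and each term obeys $\Pr(U_{i}=1)=G(t_{i})\le F(t_{i})\le F(t_{n})=1-e^{-(t_{n}/\lambda)^{\kappa}}$ by Lemma~\ref{lem:rib}, monotonicity of $F$, and $t_{i}\le t_{n}$ on the top layer; with $\binom{n+d-1}{d-1}$ such indices one gets $\mathbb{E}[|J|]\le\binom{n+d-1}{d-1}(1-e^{-(t_{n}/\lambda)^{\kappa}})$. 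Only the marginals enter, so independence is not in fact needed for this bound. Combining yields
$$
\mathbb{E}[\|u-u^{\text{gcp}}_{\mathcal{I}}\|_{2}]\le\epsilon_{n}+3^{-d}|u|_{H^{2}_{\text{mix}}}2^{-2n}\binom{n+d-1}{d-1}\bigl(1-e^{-(t_{n}/\lambda)^{\kappa}}\bigr).
$$

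The last step, and the one needing the most care, is to show the prefactor of the fault term is at most $3\epsilon_{n}$, which produces the stated coefficient $1+3(1-e^{-(t_{n}/\lambda)^{\kappa}})$. Using the closed form \eqref{eqn:ccterr2}, $3\epsilon_{n}=3^{-d}2^{-2n}|u|_{H^{2}_{\text{mix}}}\sum_{k=0}^{d-1}\binom{n+d}{k}(1/3)^{d-1-k}$, so the claim reduces to the elementary inequality $\binom{n+d-1}{d-1}\le\sum_{k=0}^{d-1}\binom{n+d}{k}(1/3)^{d-1-k}$, which holds because the $k=d-1$ term alone is $\binom{n+d}{d-1}\ge\binom{n+d-1}{d-1}$ and the other terms are nonnegative. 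I expect this comparison to be the main subtlety: routing it through the raw series (keeping only the leading $k=n+1$ term of $\epsilon_{n}$) would require $\binom{n+d-1}{d-1}\le\tfrac34\binom{n+d}{d-1}$, which fails once $n$ is large relative to $d$, whereas the closed form \eqref{eqn:ccterr2} makes the factor $3$ valid for all $n$ and $d$.
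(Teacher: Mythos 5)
Your proposal is correct and follows essentially the same route as the paper's proof: only top-layer indices can be lost so $I'$ is a downset with unique GCP solution $w_{i}=1$ on $I'\downarrow$, the error splits into the fault-free bound $\epsilon_{n}$ plus the lost level-$n$ hierarchical contributions, Lemma~\ref{lem:rib} bounds each failure probability by $F(t_{n})=1-e^{-(t_{n}/\lambda)^{\kappa}}$, and the factor $3$ comes from the same inequality $\binom{n+d-1}{d-1}\le\sum_{k=0}^{d-1}\binom{n+d}{k}(1/3)^{d-1-k}$. Your two additions — an explicit justification of that binomial inequality (which the paper merely asserts) and the observation that only marginal failure probabilities, not independence, are needed — are both sound refinements of the same argument.
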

\begin{proof}
Since $u_{i}$ with $\|i\|_{1}<n$ are recomputed we have that $U_{i}=0$ 
for all $\|i\|_{1}<n$ and therefore $\Pr(\mathcal{I}=I')=0$ for $I_{n-1}\nsubseteq I'$. 
Note that $I_{k}$ is a downset for $k\geq 0$, that is $I_{k}=I_{k}\downarrow$. 
Since the $i$ with $\|i\|_{1}=n$ are covering elements for $I_{n-1}$ it follows 
that each of the $I'$ for which $\Pr(\mathcal{I}=I')>0$ are also downsets. 
It follows that there is a unique solution to the GCP for such $I'$, 
namely $w_{i}=1$ for all $i\in I'$ and $w_{i}=0$ otherwise. 
That is $w_{i}=0$ iff $U_{i}=1$ and hence $w_{i}=1-U_{i}$. 
It follows that the error is bounded by 
$$
\|u-u^{\text{gcp}}_{I'}\|_{2}
\leq\|u-u^{c}_{n}\|_{2}+\sum_{\|i\|_{1}=n}\left|1-w_{i}\right|\|h_{i}\|_{2}=\|u-u^{c}_{n}\|_{2}+\sum_{\|i\|_{1}=n}U_{i}\|h_{i}\|_{2}\,.
$$
From Lemma~\ref{lem:rib}, the probability of a fault occurring 
during the computation of any $u_{i}$ with $\|i\|_{1}=n$ is bounded by 
$G(t_{n})$ and therefore for $\|i\|_{1}=n$ one has
\begin{align*}
\mathbb{E}[U_{i}]=0\cdot\Pr(U_{i}=0)+1\cdot\Pr(U_{i}=1)=\Pr(U_{i}=1)=G(t_{i})\leq G(t_{n}) \,.
\end{align*}
It follows that
\begin{align}
\mathbb{E}\left[\|u-u^{\text{gcp}}_{\mathcal{I}}\|_{2}\right]
&\leq\mathbb{E}\left[\|u-u^{c}_{n}\|_{2}+\sum_{\|i\|_{1}=n}U_{i}\|h_{i}\|_{2}\right] \nonumber \\
&=\|u-u^{c}_{n}\|_{2}+\sum_{\|i\|_{1}=n}\mathbb{E}[U_{i}]\|h_{i}\|_{2} \nonumber \\
&\leq\|u-u^{c}_{n}\|_{2}+\sum_{\|i\|_{1}=n}G(t_{i})\|h_{i}\|_{2}  \,, \label{eqn:pro1est}
\end{align}
and substituting the estimate $\|h_{i}\|_{2}\leq3^{-d}2^{-2\|i\|_{1}}|u|_{H^{2}_{\text{mix}}}$ yields
\begin{align*}
\mathbb{E}\left[\|u-u^{\text{gcp}}_{\mathcal{I}}\|_{2}\right]
&\leq\epsilon_{n}+\sum_{\|i\|_{1}=n}G(t_{n}) 3^{-d}2^{-2n}|u|_{H^{2}_{\text{mix}}} \\
&\leq\epsilon_{n}+F(t_{n})\sum_{\|i\|_{1}=n} 3^{-d}2^{-2n}|u|_{H^{2}_{\text{mix}}} \\
&=\epsilon_{n}+\left(1-e^{-(t_{n}/\lambda)^{\kappa}}\right)\binom{n+d-1}{d-1} 3^{-d}2^{-2n}|u|_{H^{2}_{\text{mix}}} \,.
\end{align*}
Now, noting that $\binom{n+d-1}{d-1}\leq\sum_{k=0}^{d-1}\binom{n+d}{k}(1/3)^{d-1-k}$ one has 
\begin{equation*}
\binom{n+d-1}{d-1}3^{-d}2^{-2n}|u|_{H^{2}_{\text{mix}}}\leq 3\epsilon_{n}
\end{equation*}
and therefore, 
\begin{equation}
\mathbb{E}\left[\|u-u^{\text{gcp}}_{\mathcal{I}}\|_{2}\right]\leq\epsilon_{n}\left(1+3\left(1-e^{-(t_{n}/\lambda)^{\kappa}}\right)\right) \,.
\label{eq:eeb1}\end{equation}
Note that as $t_{n}/\lambda\rightarrow\infty$ we have 
$\mathbb{E}\left[\|u-u^{\text{gcp}}_{\mathcal{I}}\|_{2}\right]\leq4\epsilon_{n}$.
However, the worst case scenario is when $I'=I_{n-1}$ which results 
in a classical combination of level $n-1$ which has the error bound
\begin{align*}
\|u-u^{c}_{n-1}\|_{2}\leq\epsilon_{n-1}&=\frac{1}{3}\cdot 3^{-d} 2^{-2(n-1)} |u|_{H^{2}_{\text{mix}}}\sum_{k=0}^{d-1}\binom{n-1+d}{k}\left(\frac{1}{3}\right)^{d-1-k} \\
&\leq\frac{4}{3}\cdot 3^{-d} 2^{-2n} |u|_{H^{2}_{\text{mix}}}\sum_{k=0}^{d-1}\binom{n+d}{k}\left(\frac{1}{3}\right)^{d-1-k}  \\
&=4\cdot\epsilon_{n} \,.
\end{align*}
This is consistent with the upper bound~\eqref{eq:eeb1} which is the desired result.
\qquad \end{proof}

Note the assumption that each $u_{i}$ be computed on a different node is not necessary as we have bounded the probability of a failure during the computation of $u_{i}$ to be independent of the starting time. As a result our bound is independent of the number of nodes that are used during the computation and how the $u_{i}$ are distributed among them as long as each individual $u_{i}$ is not distributed across multiple nodes. 

The nice thing about this result is that the bound on the expected
error is simply a multiple of the error bound for $u^{c}_{n}$, i.e. the result
in the absence of faults. If the bound on $\|u-u^{c}_{n}\|_{2}$ was tight then one might expect
$$
\mathbb{E}\left[\|u-u^{\text{gcp}}_{\mathcal{I}}\|_{2}\right]\lessapprox \|u-u^{c}_{n}\|_{2}\left(1+3\left(1-e^{-(t_{n}/\lambda)^{\kappa}}\right)\right) \,.
$$
Also note that~\eqref{eqn:pro1est} can be expressed as
\begin{equation*}
\mathbb{E}\left[\|u-u^{\text{gcp}}_{\mathcal{I}}\|_{2}\right]
\leq\|u-u^{c}_{n}\|_{2}+\Pr(T\leq t_{n})\|u^{c}_{n-1}-u^{c}_{n}\|_{2} \,.
\end{equation*}
If the combination technique converges for $u$ then $\|u^{c}_{n-1}-u^{c}_{n}\|_{2}\rightarrow0$ as $n\rightarrow\infty$. Since $\Pr(T\leq t_{n})\leq1$ the error due to faults diminishes as $n\rightarrow\infty$. 
We now prove an analogous result for the case where only solutions with $\|i\|_{1}<n-1$ are recomputed.

\begin{proposition}
Given $d,n>0$ and $I_{n}:=\{ i\in\mathbb{N}^{d}:\|i\|_{1}\leq n\}$ 
let $u_{i}$, $t_{i}$ and $t_{n}$ be as described in Proposition~\ref{prop:err1} with each 
$u_{i}$ computed on different nodes for which time between failures 
is iid having Weibull distribution with $\lambda>0$ 
and $0<\kappa\leq 1$. 
Additionally let $t_{n-1}=\max_{\|i\|_{1}=n-1}t_{i}$. 
Suppose we recompute any $u_{i}$ with $\|i\|_{1}<n-1$ which is interrupted by a fault, 
let $\mathcal{I}$ be the set of all possible $I'\subseteq I_{n}$ for which $u_{i}$ was 
successfully computed (eventually) iff $i\in I'$. 
Let $u^{\text{gcp}}_{\mathcal{I}}$ be the function-valued random variable corresponding to 
the result of the {\sc ftct},
then 
$$
\mathbb{E}\left[\|u-u^{\text{gcp}}_{\mathcal{I}}\|_{2}\right]\leq\epsilon_{n}\cdot\min\left\{16,1+3\left(d+5-e^{-\left(\frac{t_{n}}{\lambda}\right)^{\kappa}}-(d+4) e^{-\left(\frac{t_{n-1}}{\lambda}\right)^{\kappa}}\right)\right\} \,.
$$
\end{proposition}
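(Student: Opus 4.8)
The plan is to mirror the proof of Proposition~\ref{prop:err1}, but now accounting for the fact that second-layer failures ($\|i\|_1=n-1$) destroy the downset structure of $I'$ and couple the hierarchical coefficients across the top two layers through the validity constraints. First I would record that all $u_i$ with $\|i\|_1\leq n-2$ are recomputed, so these indices lie in every reachable $I'$ and form a downset; the {\sc gcp} therefore assigns $w_i=1$ for all $\|i\|_1\leq n-2$, and these layers contribute no error beyond $\|u-u^c_n\|_2$. Consequently the additional error is carried entirely by the top two layers, and starting from the general estimate one has
\[
\|u-u^{\text{gcp}}_{I'}\|_2\leq\|u-u^c_n\|_2+\sum_{\|i\|_1=n}(1-w_i)\|h_i\|_2+\sum_{\|i\|_1=n-1}(1-w_i)\|h_i\|_2 \,,
\]
so that the task reduces to bounding $\mathbb{E}[1-w_i]$ on each of the two layers.

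For the second layer the analysis is straightforward: a successfully computed $u_i$ with $\|i\|_1=n-1$ carries no constraint $c_i=0$, so the maximiser of the objective~\eqref{eqn:aqw} sets $w_i=1$, and only a failure forces $w_i=0$. Hence $\mathbb{E}[1-w_i]=\Pr(U_i=1)=G(t_i)\leq F(t_{n-1})$ there. The delicate part, and the main obstacle, is the top layer. A \emph{failed} second-layer index $j$ imposes the constraint $w_j=\sum_{k=1}^d w_{j+e^k}\in\{0,1\}$, which forces all but at most one of its (up to $d$) upper neighbours to zero, even when those neighbours were computed successfully. Thus a successful $u_i$ with $\|i\|_1=n$ can still receive $w_i=0$, but only when some lower neighbour $i-e^k$ at level $n-1$ has failed. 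I would bound this event by a union bound over the at most $d$ lower neighbours, giving $\Pr(w_i=0)\leq\Pr(U_i=1)+\sum_{k}\Pr(U_{i-e^k}=1)\leq F(t_n)+d\,F(t_{n-1})$, and hence $\mathbb{E}[1-w_i]\leq F(t_n)+d\,F(t_{n-1})$ on the top layer.

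With these two estimates I would insert $\|h_i\|_2\leq 3^{-d}2^{-2\|i\|_1}|u|_{H^2_{\text{mix}}}$, use the cardinalities $\binom{n+d-1}{d-1}$ and $\binom{n+d-2}{d-1}$ of the two layers, together with the elementary facts $\binom{n+d-2}{d-1}\leq\binom{n+d-1}{d-1}$ and $\binom{n+d-1}{d-1}3^{-d}2^{-2n}|u|_{H^2_{\text{mix}}}\leq 3\epsilon_n$ (the latter already used in Proposition~\ref{prop:err1}). This collapses the top-layer contribution to at most $3\epsilon_n(F(t_n)+d\,F(t_{n-1}))$ and the second-layer contribution to at most $12\epsilon_n F(t_{n-1})$, so that
\[
\mathbb{E}\!\left[\|u-u^{\text{gcp}}_{\mathcal{I}}\|_2\right]\leq\epsilon_n\bigl(1+3F(t_n)+3(d+4)F(t_{n-1})\bigr) \,.
\]
Substituting $F(t)=1-e^{-(t/\lambda)^\kappa}$ and simplifying then reproduces exactly the second entry of the claimed minimum.

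Finally I would establish the constant bound $16\epsilon_n$ by a worst-case argument that is uniform over outcomes, not merely in expectation. Every reachable $I'$ contains $I_{n-2}$, so $w_i=1$ for $\|i\|_1\leq n-2$ in all cases and $w_i\in\{0,1\}$ on the two top layers; the largest possible value of the error sum occurs when all of layers $n-1$ and $n$ are lost, yielding the classical combination $u^c_{n-2}$. Bounding $\|u-u^c_{n-2}\|_2\leq\epsilon_{n-2}$ and noting $\epsilon_{n-2}\leq 16\epsilon_n$ (the factor $2^{-2(n-2)}/2^{-2n}=16$ together with $\binom{n-2+d}{k}\leq\binom{n+d}{k}$) gives a bound on every outcome, hence on the expectation. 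Taking the smaller of the two bounds yields the stated result; throughout, the step requiring the most care is the top-layer forcing in the second paragraph, namely verifying that a successful top-layer coefficient is zeroed \emph{only} through a failed lower neighbour and that the union bound captures this faithfully.
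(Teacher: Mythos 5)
Your proposal is correct and follows essentially the same route as the paper: where the paper charges each failed second-layer index $i$ with the loss of its entire upper set ($\|h_{i}\|_{2}$ plus its $d$ level-$n$ neighbours, giving the $(d+4)3^{-d}2^{-2n}|u|_{H^{2}_{\text{mix}}}$ bound of~\eqref{eqn:l2b}), you instead charge each top-layer index with a union bound over its possible causes of loss (its own failure or that of a lower neighbour) --- these are the same double sum with the order of summation exchanged, and both collapse to $\epsilon_{n}\left(1+3F(t_{n})+3(d+4)F(t_{n-1})\right)$, which matches the second entry of the minimum. Your closing step, the worst case $I'=I_{n-2}$ with $\|u-u^{c}_{n-2}\|_{2}\leq 16\epsilon_{n}$, is also exactly the paper's, and your pointwise-over-outcomes justification of it is in fact slightly more explicit than the paper's one-line appeal to the worst case.
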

\begin{proof}
This is much the same as the proof of Proposition~\ref{prop:err1}. The solution to the {\sc gcp} for $I_{n}$ satisfies the property that if $w_{i}=0$ for $\|i\|_{1}=n-1$ then $u_{i}$ was not computed successfully, that is $U_{i}=1$. However the converse does not hold in general. Regardless, if $U_{i}=1$ for $\|i\|_{1}=n-1$ then the worst case is that $w_{i}=0$ and $w_{j}=0$ for the $d$ possible $\|j\|_{1}=n$ satisfying $j>i$. We therefore note that
the error generated by faults affecting $u_{i}$ with $\|i\|_{1}=n-1$ is bounded
by
\begin{equation}\label{eqn:l2b}
\sum_{j\in I_{n},\,j\geq i}\|h_{j}\|_{2}\leq (d+4)3^{-d}2^{-2n}|u|_{H^{2}_{\text{mix}}} \,.
\end{equation}
Therefore we have 
\begin{align*}
\mathbb{E}\left[\|u-u^{\text{gcp}}_{\mathcal{I}}\|_{2}\right]
&\leq\|u-u^{c}_{n}\|_{2}+\sum_{\|i\|_{1}=n}G(t_{n})\|h_{i}\|_{2} \\
&\qquad+\sum_{\|i\|_{1}=n-1}G(t_{n-1})\sum_{j\in I,\,j\geq i}\|h_{j}\|_{2} \\
&\leq\epsilon_{n}\left(1+3\left(1-e^{-(t_{n}/\lambda)^{\kappa}}\right)\right) \\
&\qquad+\binom{n-1+d-1}{d-1}\left(1-e^{-(t_{n-1}/\lambda)^{\kappa}}\right)(d+4)3^{-d}2^{-2n}|u|_{H^{2}_{\text{mix}}} \\
&\leq\epsilon_{n}\left(1+3\left(1-e^{-(t_{n}/\lambda)^{\kappa}}\right) +3(d+4)\left(1-e^{-(t_{n-1}/\lambda)^{\kappa}}\right)\right) \,.
\end{align*}
Now the expected error should be no more than the worse case which is
$I'=I_{n-2}$ for which we have $\|u-u^{c}_{n-2}\|_{2}\leq
16\epsilon_{n}$. Taking the minimum of the two estimates yields the desired
result.
\qquad\end{proof}

To illustrate how this result may be used in practice, suppose we compute a level
$12$ interpolation in $3$ dimensions on a machine whose mean time to failure 
can be modelled by the Weibull distribution with a mean of $100$ seconds and 
shape parameter $0.7$. Further, suppose $u_{i}$ with $\|i\|_{1}>10$ are not 
recomputed if lost as a result of a fault and that $t_{12}$ is estimated to be $1.0$ 
seconds and $t_{11}$ is at most $0.5$ seconds. The expected error for our 
computation is bounded above by $1.63$ times the error bound if no
faults were to occur.

Whilst this provides some theoretical validation of our approach, in practice we can 
numerically compute an improved estimate by enumerating all possible outcomes and the probability of each occurring. 
The reason for this is that Equation~\eqref{eqn:l2b} is an overestimate in general, 
particularly for relatively small $d$. In practice, a fault on $u_{i}$ with $\|i\|_{1}=n-1$
will generally result in the loss of $d-1$ of the largest hierarchical spaces in
which case Equation~\eqref{eqn:l2b} overestimates by a factor of $\frac{d+4}{d-1}$.



\subsection{Expected computation time}

We now repeat the above analysis, this time focusing on the mean time required
for recomputations. The first issue to consider is that a failure may occur
during a recomputation which will trigger another recomputation. Given a
solution $u_{i}$ with $\|i\|_{1}=m$, the probability of having to recompute $r$
times is bounded by $G(t_{m})^{r}\leq F(t_{m})^{r}$. Hence the expected number of
recomputations for such a $u_{i}$ is bounded by
$$
\sum_{r=1}^{\infty}rF(t_{m})^{r}=\frac{F(t_{m})}{(1-F(t_{m}))^{2}} =e^{(t_{m}/\lambda)^{\kappa}}(e^{(t_{m}/\lambda)^{\kappa}}-1) \,. 
$$
Let the time required to compute each $u_{i}$ be bounded by $t_{i}\leq c2^{\|i\|_{1}}$ for some fixed $c>0$ and all $\|i\|_{1}\leq n$. 
For a given $m\leq n$, suppose we intend to recompute all $u_{i}$ with $\|i\|_{1}\leq m$ upon failure,
then the expected time required for recomputations is bounded by
$$
R_{m}
\leq\sum_{\|i\|_{1}\leq m}t_{\|i\|_{1}}\sum_{r=1}^{\infty}rF(t_{\|i\|_{1}})^{r}
\leq\sum_{k=0}^{m}\binom{k+d-1}{d-1}c2^{k} e^{(c2^{k}/\lambda)^{\kappa}}\left(e^{(c2^{k}/\lambda)^{\kappa}}-1\right) \,.
$$
and by bounding components of the sum with the case $k=m$ one obtains
\begin{align*}
R_{m}&\leq\binom{m+d-1}{d-1} e^{(c2^{m}/\lambda)^{\kappa}}\left(e^{(c2^{m}/\lambda)^{\kappa}}-1\right) \sum_{k=0}^{m}c2^{k} \\
&\leq\binom{m+d-1}{d-1} e^{(c2^{m}/\lambda)^{\kappa}}\left(e^{(c2^{m}/\lambda)^{\kappa}}-1\right)c2^{m+1} \,.
\end{align*}
The time required to compute all $u_{i}$ with $\|i\|_{1}\leq n$ once is similarly bounded by
$$
C_{n}=\sum_{\|i\|_{1}\leq n}t_{i}
\leq\sum_{k=0}^{n}\binom{k+d-1}{d-1}t_{k}
\leq\binom{n+d-1}{d-1} c 2^{n+1}
$$ 
and hence $R_{m}/C_{n}\approx(m/n)^{d-1}2^{m-n}e^{(c 2^{m}/\lambda)^{\kappa}} (e^{(c
2^{m}/\lambda)^{\kappa}}-1)$ estimates the expected proportion of extra time spent on recomputations. We would generally expect that $c 2^{m}/\lambda<<1$
(we assume the time to compute level $m$ grids is much less than the mean time
to failure) and therefore this quantity is small. As an example, if we again consider a level $12$ computation in $3$
dimensions for which $t_{m}\leq 2^{-(m-12)}$ and the time to failure is Weibull distributed 
with mean $100$ seconds with shape parameter $0.7$, the expected proportion of time spent
recomputing solutions level $10$ or smaller is
$(10/12)^{2}2^{-2}e^{(0.25/100)^{0.7}}(e^{(0.25/100)^{0.7}}-1)\approx2.68\times10^{-3}$. In
comparison, if any of the $u_{i}$ which fail were to be recomputed then 
a proportion of $4.23\times10^{-2}$ additional time for would be expected for
recomputations, almost $16$ times more. Whilst this is a somewhat crude estimate it clearly demonstrates the our approach will scale better than a traditional checkpoint restart when faults are relatively frequent.

\section{Implementation and Scalability}\label{sec:implem}

At the heart of our implementation is a very simple procedure: solve the problem
on different grids, combine the solutions, and repeat.
This section is broken up into 
different sub-sections based upon where different layers of parallelism can be
implemented. 
We conclude by discussing some bottlenecks in the current implementation.

\subsection{Top Layer: Load Balancing and Combination}

The top layer is written in Python. As in many other applications, we use Python
to glue together the different components of our implementation as well as providing some high level functions for performing the combination. It is based
upon the development of NuMRF~\cite{iccs_paper,jay_parco}: intended to be
a clean interface where different computation codes can be easily
interchanged or added to the application. This layer can be further broken down
into 4 main parts.
The first is the loading of all dependencies including various Python and {\em
numpy} modules as well as any shared libraries that will be used to solve the
given problem. In particular, bottom layer components which have been compiled
into shared libraries from various languages (primarily C++ with C wrappers in
our case) are loaded into Python using {\em ctypes}.
The second part is the initialisation of data structures and
construction/allocation of arrays which will hold the relevant data. This is achieved
using PyGraFT~\cite{iccs_paper,jay_parco} which is a general class of grids and
fields that allows us to handle data from the various components in a generic
way. Also in this part of the code is the building of a sparse grid data
structure. This is done with our own C++ implementation which was loaded into
Python in the first part of the code.
The third part consists of solving the given problem. This is broken into
several "combination steps". A "combination step" consists of a series of time
steps of the underlying solver for each component solution, followed by a
combination of the component solutions into a sparse grid solution, and finally
a sampling of the component solutions from the sparse grid solution before
repeating the procedure. 
The fourth and final part of the code involves checking the error of the
computed solutions, reporting of various log data and finalise/cleanup.

The top layer is primarily responsible for the coarsest grain parallelism, that
is distributing the computation of different component solutions across
different processes. This is achieved through MPI using {\em mpi4py}\footnote{http://mpi4py.scipy.org/}. 
There are two main tasks the top layer must perform in order
to effectively handle this. The first is to determine an appropriate load
balancing of the different component solutions across a given number of
processes. For our simple problem this can be done statically on startup before 
initialising any data structures. For more
complex problems this can be done dynamically by evaluating the load balancing
and redistributing if necessary at start of each combination step based upon timings 
performed in the last combination step.
Re-distribution of the grids may require reallocating many of the data structures. 
The second task the top layer is responsible for is the communication between
MPI processes during the combination step. This is achieved using two {\sc
all{\_}reduce} calls. The first call is to establish which solutions have been
successfully computed. This is required so that all processes are able to
compute the correct combination coefficients. Each process then does a partial sum of
the component solutions it has computed. The second {\sc all{\_}reduce} call
then completes the combination of all component solutions distributing the
result to all processes. Following this the component solutions are then sampled
from the complete sparse grid solution.

\subsection{Bottom Layer: Solver and Sparse Grid Algorithms}

The bottom layer is made up of several different components, many of which are
specific to the problem that is intended to be solved. When solving our
advection problem we have 2 main components, one is responsible for the sparse
grid data structure and functions relating to the sparse grid (e.g.
interpolation and sampling of component solutions) and the other component is
the advection solver itself.
Both the sparse grid and advection solver components use OpenMP to achieve a
fine grain level of parallelism. This is primarily achieved by distributing the
work of large for loops within the code across different threads. The for loops
have roughly constant time per iteration so the distribution of work amongst
threads is done statically.

\subsection{Optional Middle Layer: Domain Decompositions}

The middle layer is currently being developed into the programming model. It is
intended solely to handle various aspects relating to the computation of
component solutions where domain decompositions are added as a third layer of
parallelism. This will be achieved through an interface with a distributed array
class of the NuMRF/PyGraFT framework at the top layer. This layer will need to interface
with solver kernel from the bottom layer and then perform communication of data
across domain boundaries. The combination of solutions onto the sparse grid in
the top layer will also need to interface with this layer to handle the
communication of different domains between MPI processes. 
It is intended that most of this will be transparent to the user.

\subsection{Scalability bottlenecks}

Since interpolation of the sparse grid and the solver (and any other time
consuming operations) each benefit from load balancing with MPI and work sharing
with OpenMP, any major hurdles to scalability will be caused by the {\sc all{\_}reduce}
communication and any serial operations in the code (e.g. initialisation
routines). Ignoring initialisation parts of the code it becomes clear we need to
either reduce the size of the data which is communicated, or reduce the
frequency at which it is communicated. The first can be done if we apply some
compression to the data before communicating, i.e. we trade-off smaller
communications for additional CPU cycles. Another way is to recognise that it is
possible to do the {\sc all{\_}reduce} on a sparse grid of level $n-1$ if a partial
hierarchisation is done to the largest component grids. This doesn't improve the
rate in which the complexity grows but can at least reduce it by a constant.
Reducing the frequency of the {\sc all{\_}reduce} can be done by performing partial
combinations in place of full combinations for some proportion of the steps.
This trades off the time taken to combine with some accuracy of the
approximation. A partial combination is where a grid combines only with its
neighbouring grids.
However, the only way to really address the bottleneck caused by communication is 
to perform a full hierarchisation of the component grids~\cite{Phillips_parco_paper}. 
By doing this one can significantly reduce the communication volume at the expense 
of increasing the number of messages. One can then reduce the number of messages 
by identifying those which are communicated to the same {\sc mpi} processes. We 
currently have a first implementation of this which we intend to improve as 
development continues.

\section{Numerical Results}\label{sec:numres}

%

In this section, we present some numerical results which validate our approach. 
The problem used to test our algorithm is the scalar advection equation
$$
\frac{\partial u}{\partial t}+a \cdot \nabla u = 0
$$
on the domain $[0,1]^{3}\subset\mathbb{R}^{3}$ for constant $a\in\mathbb{R}^{3}$. 
For the results presented in this section we use $a=(1,1,1)$, periodic 
boundary conditions and the initial condition
$$
u_{0}(x)=\sin(4\pi x_{1})\sin(2\pi x_{2})\sin(2\pi x_{3}) \,.
$$
The PDE is solved using a Lax-Wendroff finite difference scheme giving results which are
second order in space and time. We compare numerical solutions against the exact solution
$$
u(x,t)=\sin(4\pi (x_{1}-a_{1}t))\sin(2\pi (x_{2}-a_{2}t))\sin(2\pi (x_{3}-a_{3}t))
$$
to determine the solution error at the end of each computation.

A truncated combination technique as in~\eqref{eqn:tct} is used for our experiments. 
In order to apply the FTCT we need to compute some additional grids. We define
$$
I_{n,\tau}:=\{i\in\mathbb{N}^{d}:\min(i)\geq \tau\text{ and }n-d-1\leq\|i\|_{1}\leq
n\}
$$ 
which is the set of indices for which we are required to compute solutions
$u_{i}$ if the top two levels are not to be recomputed in the event of a fault.

Note that as the grid sizes vary between the $u_{i}$ so does the maximum stable time 
step size as determined by the CFL condition. 
We choose the same time step size for all component solutions to avoid 
instability that may otherwise arise from the extrapolation of time stepping  
errors during the combination. As a result our timesteps must satisfy the CFL 
condition for all component grids. By choosing $\Delta t$ such that it satisfies 
the CFL condition for the numerical solution of $u_{(n-2\tau,n-2\tau,n-2\tau)}$ 
it follows that the CFL condition is also satisfied for all $u_{i}$ with $i\in I_{n,\tau}$.

All of our computations were performed on a Fujitsu PRIMERGY cluster consisting 
of 36 nodes each with 2 Intel Xeon X5670 CPUs (6 core, 2.934GHz) with Infiniband 
interconnect.

\begin{table}
\centering
\caption{\label{tab:wr2} 
Numerical results for $r=100$ runs for each $l,\tau$ using the Weibull distribution with mean of $1000$ seconds and shape parameter of $0.7$ for the fault simulation. The computation was performed on 2 nodes with 6 OpenMP threads on each.
}
\begin{tabular}{ | c | c | c | c | c | c | c | c | c | }
\hline
$l$ & $\tau$ & $f_{ave}$ & $\epsilon_{ave}$ & $\epsilon_{min}$ & $\epsilon_{max}$ & $w_{ave}$ & $w_{min}$ & $w_{max}$ \\
\hline
18 & 4 & 0.13 & 2.103e-4 & 2.096e-4 & 2.298e-4 & 26.96 & 26.94 & 27.07 \\
20 & 5 & 0.37 & 7.064e-5 & 7.004e-5 & 8.580e-5 & 72.03 & 71.82 & 72.72 \\
19 & 4 & 0.48 & 6.355e-5 & 6.266e-5 & 6.899e-5 & 131.1 & 130.8 & 131.9 \\
21 & 5 & 1.21 & 1.979e-5 & 1.886e-5 & 3.030e-5 & 379.9 & 379.3 & 381.9 \\
20 & 4 & 1.80 & 1.925e-5 & 1.856e-5 & 2.156e-5 & 649.8 & 648.2 & 653.3 \\
\hline
\end{tabular}
\end{table}

\begin{table}
\centering
\caption{\label{tab:wr6} 
Numerical results for $r=200$ runs for each $l,\tau$ using the Weibull distribution with mean of $1000$ seconds and shape parameter of $0.7$ for the fault simulation. The computation was performed on 6 nodes with 6 OpenMP threads on each.
}
\begin{tabular}{ | c | c | c | c | c | c | c | c | c | }
\hline
$l$ & $\tau$ & $f_{ave}$ & $\epsilon_{ave}$ & $\epsilon_{min}$ & $\epsilon_{max}$ & $w_{ave}$ & $w_{min}$ & $w_{max}$ \\
\hline
18 & 4 & 0.305 & 2.119e-4 & 2.096e-4 & 2.418e-4 & 9.216 & 9.196 & 9.392 \\
20 & 5 & 0.535 & 7.166e-5 & 7.004e-5 & 1.170e-4 & 24.77 & 24.67 & 25.22 \\
19 & 4 & 0.690 & 6.385e-5 & 6.244e-5 & 7.392e-5 & 44.55 & 44.40 & 45.90 \\
21 & 5 & 1.805 & 2.015e-5 & 1.886e-5 & 3.179e-5 & 131.7 & 130.9 & 134.6 \\
20 & 4 & 2.475 & 1.961e-5 & 1.844e-5 & 2.478e-5 & 224.4 & 223.3 & 228.7 \\
\hline
\end{tabular}
\end{table}

\subsection{Solution Error} 
We first looked at the effect of simulated faults on the error the computed solution. 
Given level $n$ and truncation parameter $\tau$ the code was executed for some number 
of runs $r$ on a fixed number of nodes using the same number of threads. Component 
solutions are combined twice in each run, once halfway through the time steps and 
again at the end. 
For each run we recorded the number of faults $f$ that occurred, the $l_{1}$ error of 
the solution $\epsilon$ and the wall time $w$ spent in the solver. 
We then calculated the average number of faults
$$
f_{ave}=\frac{1}{r}\sum_{k=1}^{r}f_{k} \,,
$$ 
the average, minimal and maximal observed errors
$$
\epsilon_{ave}=\frac{1}{r}\sum_{k=1}^{r}\epsilon_{k} \,,\quad \epsilon_{min}=\min\{\epsilon_{1},\dots,\epsilon_{r}\} \,,\quad \epsilon_{max}=\max\{\epsilon_{1},\dots,\epsilon_{r}\} \,,
$$
and the average, minimal and maximal observed wall times
$$
w_{ave}=\frac{1}{r}\sum_{k=1}^{r}w_{k} \,,\quad w_{min}=\min\{w_{1},\dots,w_{r}\} \,,\quad w_{max}=\max\{w_{1},\dots,w_{r}\} \,.
$$

Table~\ref{tab:wr2} shows our results for $r=100$ runs of the FTCT with fault simulation 
on 2 nodes with 6 OpenMP threads on each. Faults were simulated as described 
in Section~\ref{sec:fs} using the Weibull distribution with mean of $1000$ seconds 
and shape parameter $0.7$ to sample the time between failures. As we increase the 
level $n$ (or decrease $\tau$) we increase the problem size and hence computation 
time. This in turn leads to an increase in the average number of faults that occur 
per run as seen in the $f_{ave}$ column. The minimal error is the same as the 
error without failure (sometimes it is fractionally smaller). Comparing with the 
average error we see that the additional error generated by recovery from simulated 
faults is small. Also worth noting is that the variability in computation time is quite
small indicating that any recomputations, when they occur, do not seem to cause any
significant disruptions.

\begin{table}
\centering
\caption{\label{tab:er2} 
Numerical results for $r=100$ runs for each $l,\tau$ using the exponential distribution with mean of $1000$ seconds for the fault simulation. The computationa were performed on 2 nodes with 6 OpenMP threads on each.
}
\begin{tabular}{ | c | c | c | c | c | c | c | c | c | }
\hline
$l$ & $\tau$ & $f_{ave}$ & $\epsilon_{ave}$ & $\epsilon_{min}$ & $\epsilon_{max}$ & $w_{ave}$ & $w_{min}$ & $w_{max}$ \\
\hline
18 & 4 & 0.06 & 2.098e-4 & 2.096e-4 & 2.230e-4 & 26.96 & 26.93 & 27.10 \\
20 & 5 & 0.17 & 7.098e-5 & 7.006e-5 & 1.155e-4 & 72.01 & 71.87 & 72.47 \\
19 & 4 & 0.26 & 6.321e-5 & 6.266e-5 & 7.283e-5 & 131.1 & 130.8 & 131.7 \\
21 & 5 & 0.71 & 1.942e-5 & 1.886e-5 & 2.980e-5 & 379.8 & 379.3 & 380.8 \\
20 & 4 & 1.27 & 1.921e-5 & 1.856e-5 & 2.127e-5 & 649.3 & 647.9 & 653.4 \\
\hline
\end{tabular}
\end{table}

\begin{table}
\centering
\caption{\label{tab:er6} 
Numerical results for $r=100$ runs for each $l,\tau$ using the exponential distribution with mean of $1000$ seconds for the fault simulation. The computations were performed on 6 nodes with 6 OpenMP threads on each.
}
\begin{tabular}{ | c | c | c | c | c | c | c | c | c | }
\hline
$l$ & $\tau$ & $f_{ave}$ & $\epsilon_{ave}$ & $\epsilon_{min}$ & $\epsilon_{max}$ & $w_{ave}$ & $w_{min}$ & $w_{max}$ \\
\hline
18 & 4 & 0.070 & 2.103e-4 & 2.096e-4 & 2.296e-4 & 9.231 & 9.214 & 9.340 \\
20 & 5 & 0.155 & 7.034e-5 & 7.004e-5 & 8.162e-5 & 24.80 & 24.70 & 30.83 \\
19 & 4 & 0.265 & 6.327e-5 & 6.244e-5 & 7.039e-5 & 44.48 & 44.36 & 45.07 \\
21 & 5 & 0.865 & 1.936e-5 & 1.886e-5 & 3.203e-5 & 131.7 & 131.0 & 134.5 \\
20 & 4 & 1.415 & 1.921e-5 & 1.844e-5 & 2.178e-5 & 224.0 & 222.9 & 227.4 \\
\hline
\end{tabular}
\end{table}

In Table~\ref{tab:wr6} we repeat this experiment with $r=200$ runs on 6 nodes 
with 6 OpenMP threads on each. Whilst running with additional nodes leads to a 
decrease in computation time we experience more faults on average because of 
the additional nodes. However, we can see that the effect of the increased average 
number of faults is quite small on both the average solution error and the average 
wall time.

Table~\ref{tab:er2} again shows results for $r=100$ runs of the FTCT with fault simulation
on 2 nodes with 6 OpenMP threads on each. However, for this experiment the faults are 
exponentially distributed with a mean of $1000$ seconds. We see that for this
distribution the faults are a little less frequent on average leading to a
slightly smaller average error. Similar is observed in Table~\ref{tab:er6} where we
repeat the experiment with $r=200$ runs on 6 nodes with 6 OpenMP threads on each. 
Here the average number of faults is substantially less than the results of 
Table~\ref{tab:wr6} and this is again reflected by a smaller average error in comparison. 
The large $w_{max}$ in the 2nd row is due to a single outlier, the next largest time being 
$25.34$. No simulated faults occurred for this outlier so we suspect it was due to a system issue.

\subsection{Scalability}

In Figure~\ref{fig:scala2} we demonstrate the scalability and efficiency of our
implementation when the fault simulation is disabled. Noting from 
Table~\ref{tab:wr6} that faults have very little effect on the computation time 
we expect similar results with fault simulation turned on. 
The advection problem was solved using a $n=22, \tau=6$ truncated combination. 
The component solutions were combined only once at the end of the computation.
The {\em solver} time reported here is the timing of the core of the code, 
that is the repeated computation, combination and communication of the solution 
which is highly scalable. The {\em total} time reported here includes the time
for Python to load modules and shared libraries, memory allocation and error checking. 
The error checking included in the {\em total} time is
currently computed in serial and could benefit from OpenMP parallelism.

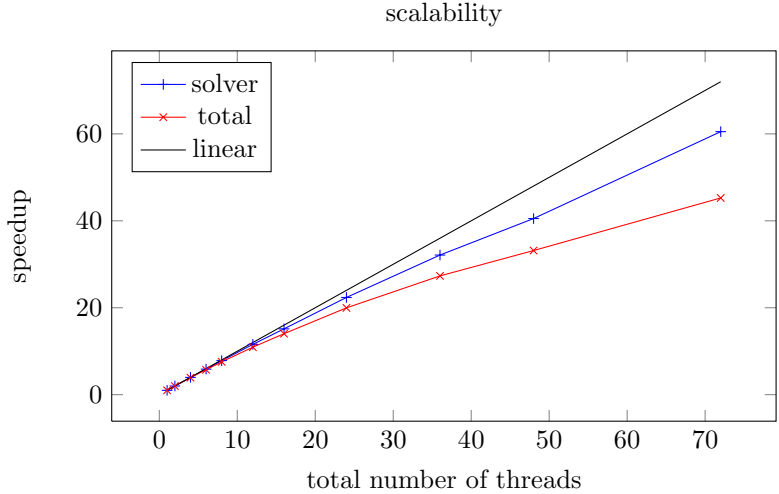
\begin{figure}
\centering
\begin{tikzpicture}
\begin{axis}[width=0.8\textwidth,
	height=0.5\textwidth,
	ylabel=speedup,
	xlabel=total number of threads,
	title=scalability,
	legend style={legend pos=north west}
]
\addplot[color=blue,mark=+]
	coordinates {(1, 1.0) (2, 1.999503001868713) (4, 3.9778915936641024) (6, 5.878718802969197) (8, 7.836917562724014) (12, 11.574774152713045) (16, 15.145158861617226) (24, 22.36579942183678) (36, 32.12360268284893) (48, 40.523368251410155) (72, 60.516847172081825)};
\addlegendentry{solver}
\addplot[color=red,mark=x]
	coordinates {(1, 1.0) (2, 1.9895584696232995) (4, 3.91569199837609) (6, 5.72765319684416) (8, 7.554623102457946) (12, 10.928991528624614) (16, 14.031659161759613) (24, 19.969141279700285) (36, 27.32690232056125) (48, 33.1609364767518) (72, 45.26189944134078)};
\addlegendentry{total}
\addplot[color=black,mark=none]
	coordinates {(1, 1.0) (2, 2.0) (4, 4.0) (6, 6.0) (8, 8.0) (12, 12.0) (16, 16.0) (24, 24.0) (36, 36.0) (48, 48.0) (72, 72.0)};
\addlegendentry{linear}
\end{axis}
\end{tikzpicture}
\caption{\label{fig:scala2} This plot demonstrates the scalability of our implementation for the advection problem using a $l=22, \tau=6$ truncated combination. Fault simulation was disabled and only one combination was performed at the end of all computations. 
}
\end{figure}

In Figure~\ref{fig:tvf} we compare the computation time required for our 
approach to reach a solution compared to more traditional checkpointing approaches, 
in particular, with a local and global checkpointing approach. 
With global checkpointing we keep a copy of the last combined solution. 
If a failure affects any of the component grids it is assumed that the entire 
application is killed and computations must be restarted from the most recent combined solution. 
We emulate this by checking for faults at each combination step and restart from the last 
combination step if any faults have occurred. 
With local checkpointing each MPI process saves a copy of each component solution it computes. 
In this case when a faults affect component solutions we need only recompute the affected 
component solutions from their saved state. 
In both checkpointing methods the extra component solutions 
used in our approach are not required and are hence not computed. 
As a result these approaches are slightly faster when no faults occur. 
However, as the number of faults increases, it can be seen from Figure~\ref{fig:tvf} 
that the computation time for the local and global checkpointing methods begins 
to grow. A line of best fit has been added to the figure which makes it clear that the 
time for recovery with global checkpointing increases rapidly with the number of 
faults. Local checkpointing is a significant improvement on this but still shows some growth. 
On the other hand our approach is barely affected by the number of faults and 
beats both the local and global checkpointing approaches after only a few faults. 
For much larger number of faults our approach is significantly better.

\begin{figure}
\centering
\begin{tikzpicture}
	\begin{axis}[width=0.8\textwidth,
	height=0.5\textwidth,
	ylabel=total computation time,
	xlabel=total number of faults,
	title=time vs. faults for 3D advection,
	legend style={legend pos=north east},
	xmin=-5,
	xmax=55,
	ymin=120,
	ymax=410, 
	legend entries={recombine,
				  local checkpoint,
				  global checkpoint}
	]
	\addlegendimage{mark=x,blue}
	\addlegendimage{mark=+,red}
	\addlegendimage{mark=o,black}
	\pgfplotstableread{rc_scatter.dat}\tableA
	\addplot[mark=x,only marks,color=blue] table[x=f,y=t] from \tableA;
	\pgfplotstableread{rc_bestfit.dat}\tableB
	\addplot[no marks,color=blue,forget plot] table[x=f,y=t] from \tableB;
	\pgfplotstableread{cp_scatter.dat}\tableC
	\addplot[mark=+,only marks,color=red] table[x=f,y=t] from \tableC; 
	\pgfplotstableread{cp_bestfit.dat}\tableD
	\addplot[no marks,color=red] table[x=f,y=t,forget plot] from \tableD;
	\pgfplotstableread{gcp_scatter.dat}\tableE
	\addplot[mark=o,only marks,color=black] table[x=f,y=t] from \tableE;
	\pgfplotstableread{gcp_bestfit.dat}\tableF
	\addplot[no marks,color=black] table[x=f,y=t,forget plot] from \tableF;
	\end{axis}		
\end{tikzpicture}
\caption{\label{fig:tvf} We compare the time taken to compute the solution to the 3D advection problem using three different approaches to fault tolerance. 
The problem size is fixed at level 21 with truncation parameter 5. 
All computations used 6 MPI processes with 6 OpenMP threads each. 
Component solutions are combined 4 times throughout the computation and it is during the combination that we check for faults. 
The {\emph recombine} method is our approach described in Section~\ref{sec:ftct}. 
The {\emph local checkpoint} method involves each {\sc mpi} process checkpointing component solutions. 
The {\emph global checkpoint} method involves all {\sc mpi} processes checkpointing the last combined solution. 
For each method the problem was run numerous times with {\sc mttf} varying from $25$ to $1000$ seconds.
}
\end{figure}
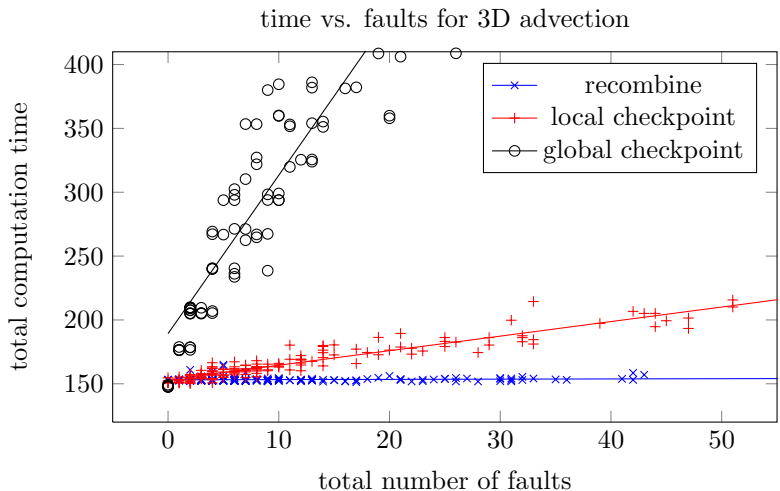

\section*{Conclusion}


A generalisation of the sparse grid combination technique has been presented. 
From this generalisation a fault tolerant combination technique has been proposed which 
significantly reduces recovery times at the expense of some upfront overhead 
and reduced solution accuracy. Theoretical bounds on the expected error and 
numerical experiments show that the reduction in solution accuracy is very small. 
The numerical experiments also demonstrate that the upfront overheads become negligible
compared to the costs of recovery using checkpoint-restart techniques if several faults occur.
There are some challenges associated with load balancing and efficient communication
with the implementation of the combination technique. Studying these aspects and 
improving the overall scalability of the initial implementation will be the subject of future work. 
As the ULFM specification continues to develop, the validation of the FTCT on a system with
real faults is also being investigated.


\begin{thebibliography}{49} 

\bibitem{bddl} {\sc G.~Bosilca, R.~Delmas, J.~Dongarra, and J.~Langou}, 
{\em Algorithm-based fault tolerance applied to high performance computing}, in J. Parallel Distrib. Comput., 69 (2009), pp.~410--416.

\bibitem{cggkks} {\sc F.~Cappello, A.~Geist, B.~Gropp, L.~Kale, B.~Kramer, and M.~Snir}, 
{\em Toward exascale resilience}, in Int. J. High Perform. Comput. Appl., 23(4) (2009), pp.~374--388.

\bibitem{cappello} {\sc F.~Cappello}, 
{\em Fault tolerance in petascale/exascale systems: current knowledge, challenges and research opportunities}, in Int. J. High Perform. Comput. Appl., 23(3) (2009), pp.~212--226.

\bibitem{dean} {\sc J.~Dean, and S.~Ghemawat}, 
{\em MapReduce: simplified data processing on large clusters}, in Commun. ACM, 51(1) (2008), pp.~107--113.

\bibitem{garcke} {\sc J.~Garcke}, 
{\em Sparse grids in a nutshell}, in In Jochen Garcke and Michael Griebel, editors, Sparse Grids and Applications, Lecture Notes in Computational Science and
  Engineering, 88 (2013), pp.~57--80.

\bibitem{gsd} {\sc G.~Gibson, B.~Schroeder, and J.~Digney}, 
{\em Failure tolerance in petascale computers}, in CTWatch Quarterly, 3(4) (2007), pp.~4--10.

\bibitem{griebel_bungartz} {\sc M.~Griebel, and H. J.~Bungartz}, 
{\em Sparse grids}, Acta Numerica, 13 (2004), pp.~147--269.

\bibitem{griebel} {\sc M.~Griebel et. al.}, 
{\em A combination technique for the solution of sparse grid problems}, in Iterative Methods in Linear Algebra (Brussels, 1991), pp.~263--281.

\bibitem{my_ctac_paper} {\sc B.~Harding and M.~Hegland}, 
{\em A robust combination technique}, in S.~McCue, T.~Moroney, D.~Mallet, and J.~Bunder, editors, Proceedings of the 16th Biennial Computational Techniques and
  Applications Conference, ANZIAM Journal, 54 (CTAC2012), pp.~C394--C411.

\bibitem{my_sga_paper} {\sc B.~Harding and M.~Hegland}, 
{\em Robust solutions to PDEs with multiple grids}, Sparse Grids and Applications - Munich 2012, J.~Garcke, D.~Pfl{\"u}ger (eds.), Lecture Notes in Computational Science and Engineering 97, Springer, 2014, to appear.

\bibitem{my_parco_paper} {\sc B.~Harding and M.~Hegland}, 
{\em A parallel fault tolerant combination technique}, M.~Bader, A.~Bode, H.-J.~Bungartz, M.~Gerndt, G.R.~Joubert, F.~Peters (eds.), Parallel Computing: Accelerating Computational Science and Engineering (CSE), Advances in Parallel Computing 25, IOS Press, 2014,  pp.~584--592.

\bibitem{hegland_asg} {\sc M.~Hegland}, 
{\em Adaptive sparse grids}, Proceedings of CTAC, ANZIAM Journal, 44 (2003), pp.~C335--C353.

\bibitem{hgc07} {\sc M.~Hegland, J.~Garcke, and V.~Challis}, 
{\em The combination technique and some generalisations}, Linear Algebra and its Applications, 420 (2007), pp.~249--275.

\bibitem{huang_abraham} {\sc K.~Huang and J.~Abraham}, 
{\em Algorithm-based fault tolerance for matrix operations}, in IEEE Trans. Comput., 33(6) (1984), pp.~518--528.

\bibitem{Phillips_parco_paper} {\sc P.~Hupp, R.~Jacob, M.~Heene, D.~Pfl{\"u}ger, and M.~Hegland}, 
{\em Global Communication Schemes for the Sparse Grid Combination Technique}, M.~Bader, A.~Bode, H.-J.~Bungartz, M.~Gerndt, G.R.~Joubert, F.~Peters (eds.), Parallel Computing: Accelerating Computational Science and Engineering (CSE), Advances in Parallel Computing 25, IOS Press, 2014, pp.~564--573.

\bibitem{karp1972} {\sc R.~Karp}, 
{\em Reducibility among combinatorial problems}, in Complexity of Computer Computations: Proc. of a Symp. on the Complexity of Computer Computations, R. E. Miller and J. W. Thatcher, Eds., The IBM Research Symposia Series, New York, NY: Plenum Press, 1972, pp.~85--103. 

\bibitem{iccs_paper} {\sc J.~Larson, M.~Hegland, B.~Harding, S.~Roberts, L.~Stals, A.~Rendell, P.~Strazdins, M.~Ali, C.~Kowitz, R.~Nobes, J.~Southern, N.~Wilson, M.~Li, Y.~Oishi},
{\em Fault-Tolerant Grid-Based Solvers: Combining Concepts from Sparse Grids and MapReduce}, Proceedings of 2013 International Conference on Computer Science (ICCS), Procedia Computer Science, Elsevier, 2013.

\bibitem{jay_parco} {\sc J.~Larson, M.~Hegland, B.~Harding, S.~Roberts, L.~Stals, A.~Rendell, P.~Strazdins, M.~Ali, J.~Southern}, 
{\em Managing complexity in the parallel sparse grid combination technique}, M.~Bader, A.~Bode, H.-J.~Bungartz, M.~Gerndt, G.R.~Joubert, F.~Peters (eds.), Parallel Computing: Accelerating Computational Science and Engineering (CSE), Advances in Parallel Computing 25, IOS Press, 2014, pp.~593--602.

\bibitem{murarasu} {\sc A.~Murarasu, J.~Weidendorfer, G.~Buse, D.~Butnaru, D.~Pfl\"{u}ger},
{\em Compact data structure and scalable algorithms for the sparse grid technique}, ACM SIGPLAN Notices, 46(8), ACM 2011.

\bibitem{schroeder_gibson} {\sc B.~Schroeder and G.~Gibson}, 
{\em A large-scale study of failures in high-performance computing
  systems}, in Dependable Systems and Networks, International Conference on, 2006, pp.~249--258.

\bibitem{trivedi} {\sc K.~Trivedi},
{\em Probability and Statistics with Reliability Queuing and Computer Science Applications}, John Wiley \& Sons, Inc., New York, 2002.


\end{thebibliography}
\end{document}